\newcounter{parag}
\newtheorem{lem}{Lemma}
\newtheorem*{theorem}{Theorem}
\newtheorem*{cor}{Corollary}
\newtheorem{prop}{Proposition}
\newcommand{\Aut}{\operatorname{Aut}}
\DeclareMathOperator{\X}{\mathfrak{X}}
\DeclareMathOperator{\Y}{\mathfrak{Y}}
\begin{document}

$\mbox{   }$ $\mbox{   }$$\mbox{   }$$\mbox{   }$$\mbox{   }$$\mbox{   }$$\mbox{   }$$\mbox{   }$$\mbox{   }$$\mbox{   }$$\mbox{   }$$\mbox{   }$$\mbox{   }$$\mbox{   }$$\mbox{   }$$\mbox{   }$$\mbox{   }$$\mbox{   }$$\mbox{   }$$\mbox{   }$$\mbox{   }$$\mbox{   }$$\mbox{   }$$\mbox{   }$$\mbox{   }$$\mbox{   }$$\mbox{   }$$\mbox{   }$$\mbox{   }$$\mbox{   }$$\mbox{   }$$\mbox{   }$$\mbox{   }$$\mbox{   }$$\mbox{   }$$\mbox{   }$$\mbox{   }$$\mbox{   }$$\mbox{   }$$\mbox{   }$$\mbox{   }$$\mbox{   }$$\mbox{   }$$\mbox{   }$$\mbox{   }$$\mbox{   }$$\mbox{   }$$\mbox{   }$$\mbox{   }$$\mbox{   }$$\mbox{   }$$\mbox{   }$$\mbox{   }$$\mbox{   }$$\mbox{   }$$\mbox{   }$$\mbox{   }$$\mbox{   }$$\mbox{   }$$\mbox{   }$$\mbox{   }$$\mbox{   }$$\mbox{   }$$\mbox{   }$$\mbox{   }$$\mbox{   }$ MSC 20D06, 20D60, 20E07, 20D20

\medskip
\medskip
\medskip

\begin{center}
{\bf {\large Finite simple exceptional groups of Lie type in which all the subgroups of odd index are pronormal}}

\medskip
\medskip
\medskip

\medskip
\medskip
\medskip

A.~S.~Kondrat'ev, N.~V.~Maslova, D.~O.~Revin

\end{center}

\bigskip

{\bf Abstract} {\it  A subgroup $H$ of a group $G$ is said to be {pronormal} in $G$ if $H$ and $H^g$ are conjugate in $\langle H, H^g \rangle$ for every $g \in G$. In this paper, we classify finite simple groups $E_6(q)$ and ${}^2E_6(q)$ in which all the subgroups of odd index are pronormal. Thus, we complete a classification of finite simple exceptional groups of Lie type in which all the subgroups of odd index are pronormal.

\smallskip

{\bf Keywords:}  finite group, simple group, exceptional group of Lie type, pronormal subgroup, odd index, Sylow $2$-subgroup.\smallskip
}
\bigskip

\section{Introduction}

Throughout the paper we consider only finite groups, and henceforth the term group means finite group.

According to Ph. Hall \cite{Hall_lectures}, a subgroup $H$ of a group $G$ is said to be {\it pronormal} in $G$ 
if $H$ and $H^g$ are conjugate in $\langle H, H^g \rangle$ for every $g \in G$.

Some of well-known examples of pronormal subgroups are the following: normal subgroups, maximal subgroups, Sylow subgroups, Sylow subgroups of proper normal subgroups, Hall subgroups of solvable groups.

In 2012, E.~Vdovin and the third author \cite{VR} proved that the Hall subgroups (when they exist) are pronormal in all simple groups and, guided by the analysis in their proof, they conjectured that any subgroup of odd index of a simple group is pronormal in this group. This conjecture was disproved in \cite{KMR_2,KMR_3}. Precisely, if $q \equiv \pm 3\pmod 8$ and $n \not \in \{2^m, 2^m(2^{2k}+1) \mid m, k \in \mathbb{N} \cup \{0\}\}$, then the simple symplectic group $PSp_{2n}(q)$ contains a non-pronormal subgroup of odd index. Thus, the problem of classification of simple nonabelian groups in which the subgroups of odd index are pronormal naturally arises.

In \cite{KMR_1}, we confirmed the conjecture for many families of simple groups. Namely, it was proved that the subgroups of odd index are pronormal in the following simple groups{\rm:}
$A_n$, where $n\ge 5$,
the sporadic groups,
the groups of Lie type over fields of characteristic $2$,
$PSL_{2^n}(q)$,
$PSU_{2^n}(q)$,
$PSp_{2n}(q)$, where  $q\not\equiv\pm3 \pmod 8$,
$P\Omega_{2n+1}(q)$, $P\Omega_{2n}^\pm(q)$,
the exceptional groups of Lie type not isomorphic to $E_6(q)$ or ${}^2E_6(q)$.

Moreover, in \cite{KMR_3,KMR_4} we have proved that if $q \equiv \pm 3\pmod 8$ and $n \in \{2^m, 2^m(2^{2k}+1) \mid m, k \in \mathbb{N} \cup \{0\}\}$, then all the subgroups of odd index are pronormal in the simple symplectic group $PSp_{2n}(q)$. So, we obtained the complete classification of simple symplectic groups in which all the subgroups of odd index are pronormal.

We use the following notation: $E_6(q)=E^+_6(q)$ and ${}^2E_6(q)=E_6^-(q)$.

In this paper, we prove the following theorem.

\begin{theorem} Let $G=E^{\varepsilon}_6(q)$, where $\varepsilon\in\{+,-\}$, $q=p^m$, and $p$ is a prime.
All the subgroups of odd index are pronormal in $G$ if and only if  $q\not\equiv \varepsilon1\pmod {18}$ and if $\varepsilon= +$, then $m$ is a power of $2$.

\end{theorem}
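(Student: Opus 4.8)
The overall plan is to reduce the question to the behaviour of a fixed Sylow $2$-subgroup $S$ and of the finitely many ``large'' overgroups of $S$, and then to locate the precise arithmetic obstructions. Since a subgroup of odd index contains a Sylow $2$-subgroup and pronormality is invariant under conjugation, it suffices to decide pronormality for subgroups $H$ with $S\le H\le G$; moreover, by the Frattini-type reduction used throughout this series, such an $H$ is pronormal in $G$ if and only if, for every $g\in N_G(S)$, the subgroups $H$ and $H^g$ are conjugate in $\langle H,H^g\rangle$. Thus everything is controlled by $N_G(S)$ and by the lattice of odd-index overgroups of $S$. The case $p=2$ is immediate from \cite{KMR_1}, and here no arithmetic condition is really active: the power-of-$p$ contribution to every proper subfield or subsystem index is even, so no odd-index ``subfield'' obstruction can occur. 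Hence the content of the theorem lies in odd characteristic, and from now on I take $p$ odd and work in $G=E^{\varepsilon}_6(q)$ together with its group of inner-diagonal automorphisms $\widehat G=E^{\varepsilon}_6(q)_{\mathrm{ad}}$.

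The next step is to pin down the $2$-local data. A Sylow $2$-subgroup $S$ of $G$ lies in the normalizer of a suitable maximal torus, and its structure is governed by the $2$-adic valuations of $q-\varepsilon1$ and $q+\varepsilon1$ together with the action of the Weyl group $W(E_6)$, of order $2^{7}\cdot3^{4}\cdot5$. I would first compute $N_G(S)$ and then enumerate the maximal overgroups of $S$ of odd index. By the known description of subgroups of odd index in exceptional groups these fall into three kinds: certain parabolic subgroups $P_J$ for which the Poincaré-polynomial index $|G:P_J|$ is odd; maximal-rank (subsystem) subgroups, most notably the subgroup of type $A_2^{3}$ with its triality normalizer of shape $(A_2^{\varepsilon}(q)^{3})\cdot(3\times S_3)$ and the subgroups of type $A_5A_1$ and $D_5$; and subfield subgroups $E^{\varepsilon}_6(q_0)$ with $q=q_0^{k}$. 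For each candidate I would determine whether its $S$-containing conjugates fuse under $N_G(S)$.

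The two arithmetic conditions should then emerge as exactly the two ways this fusion can fail. The condition $q\not\equiv\varepsilon1\pmod{18}$, i.e.\ (for odd $q$) $q\not\equiv\varepsilon1\pmod9$, is the $3$-local obstruction: when $v_3(q-\varepsilon1)\ge2$ the normalizer of the $A_2^{3}$-subsystem (equivalently, of the associated torus) acquires an extra diagonal element of order $3$ that cannot be absorbed inside $\langle H,H^g\rangle$ for a suitable overgroup $H$ of $S$, producing a non-pronormal subgroup of odd index; conversely, for $q\not\equiv\varepsilon1\pmod9$ the relevant $3$-part is too small to break the $N_G(S)$-fusion, and one verifies pronormality directly, reducing to the already-known classical cases inside the $A_2^{\varepsilon}$, $A_5$ and $D_5$ factors. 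The condition ``$m$ a power of $2$ when $\varepsilon=+$'' is the field-automorphism (subfield) obstruction: if $m$ has an odd prime divisor $r$, then $E_6(q_0)$ with $q=q_0^{r}$ is a subgroup of odd index of $E_6(q)$ that is not pronormal, whereas if $m$ is a power of $2$ all proper subfield subgroups have even index and drop out. I would also have to verify that no analogous obstruction survives for $\varepsilon=-$, which is where the asymmetry of the statement is forced: for ${}^2E_6(q)$ the twisted (order-$2m$) field-automorphism structure makes the relevant subfield indices even, so no extra condition is needed.

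The main obstacle, I expect, is the ``good'' direction: proving that when both conditions hold \emph{every} odd-index overgroup of $S$ is genuinely pronormal. This requires controlling fusion in $N_G(S)$ through $W(E_6)$ simultaneously for all the parabolic and subsystem candidates, and then reducing the pronormality check inside the maximal-rank subgroups to the classical groups $A_2^{\varepsilon}(q)$, $A_5(q)$ and $D_5(q)$, where one invokes \cite{KMR_1}. The delicate point is the interaction between the $3$-local (diagonal) structure and the $2$-local structure: one must show that, outside the congruence $q\equiv\varepsilon1\pmod9$, the order-$3$ diagonal automorphisms act compatibly on the $S$-containing conjugates, so that conjugacy inside $\langle H,H^g\rangle$ is preserved for all $g\in N_G(S)$. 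The failure direction is comparatively routine, since it only requires exhibiting one bad subgroup in each excluded case.
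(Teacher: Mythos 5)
Your overall skeleton --- reduce to odd-index overgroups of a fixed Sylow $2$-subgroup $S$, control everything through $N_G(S)$ and the odd-index maximal subgroups, and locate one obstruction per excluded congruence --- is the same as the paper's, but the concrete content goes wrong at several load-bearing points. First, your list of odd-index maximal subgroups is not the correct one: by Liebeck--Saxl (as recorded in Lemma~\ref{MaxOddInSubgrE6}(9)) the odd-index maximal subgroups are, up to conjugacy, the subgroup $W$ with $E(W)\cong Spin_8^+(q)$ and $W/EC_W(E)\cong Sym_4$, the subfield subgroups $G_0(r)$, the parabolics $P_1,P_2$ (which exist \emph{only} for $\varepsilon=+$), the involution centralizer $C=C_G(t)$ (maximal only for $\varepsilon=-$), and the torus normalizer $(q-\varepsilon1)^6/(3,q-\varepsilon1).\Aut(U_4(2))$ when $q\equiv\varepsilon1\pmod 4$. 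Neither the $A_2^3$ normalizer nor $A_5A_1$ appears, and the $A_2^3$ normalizer is precisely where you locate the mod~$9$ obstruction. The actual counterexample for $9\mid q-\varepsilon1$ lives in $W$: one takes $H=\langle S,x\rangle$ with $x$ a $3$-element inducing $Sym_3$ modulo the preimage $U$ of $O_2(W/EC_W(E))$, and shows via the supplement criterion of \cite{KMR_2} that $N_V(H)[H,V]<V$ for $V=\Omega_1(P)$, where $P$ is a Sylow $3$-subgroup of $O(W)$; the hypothesis $9\mid q-\varepsilon1$ enters exactly through $P\cong{\mathbb Z}_{3^n}\times{\mathbb Z}_{3^{n-1}}$ with $n\ge 2$. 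The phrase ``an extra diagonal element of order $3$ that cannot be absorbed'' does not describe a mechanism that would yield this.

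Second, your subfield obstruction is stated incorrectly: $E_6(q_0)$ is a \emph{maximal} subgroup of $E_6(q)$ and is therefore automatically pronormal. The paper's non-pronormal subgroup is $H=SV_0$, where $V_0$ is the unipotent radical of an odd-index parabolic of the subfield subgroup sitting inside a parabolic $P$ of $G$ with abelian unipotent radical $V$; one conjugates by a central torus element $g$ whose character value $\chi(r)$ lies outside $\mathbb{F}_{q_0}$ (Zsigmondy), so that $V_0$ and $V_0^g$ are distinct normal subgroups of $\langle H,H^g\rangle$. Relatedly, your explanation of the $\varepsilon=-$ asymmetry is wrong: subfield subgroups ${}^2E_6(q_0)$ of odd prime degree \emph{do} have odd index in ${}^2E_6(q)$; the construction fails for $\varepsilon=-$ because ${}^2E_6(q)$ has no odd-index parabolic, and the subfield case is then handled by induction on the field. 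Finally, the positive direction does not reduce to pronormality in $A_2$, $A_5$ and $D_5$: the case $M=C_G(t)$ is immediate because $N_G(S)=S\times R$ with $R\le Z(C)$, so $H^g=H$; the cases $W$ and $N^\varepsilon$ use coprime action on abelian normal subgroups together with the supplement criterion; and the parabolic case uses a basis argument over $\mathbb{F}_p$ (this is where $m=2^k$ is needed, via Lemma~\ref{Fields2}) to show the unipotent part of an odd-index subgroup is normalized by the torus. As written, your plan would not produce either direction of the theorem.
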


So, we complete a classification of simple exceptional groups of Lie type in which all the subgroups of odd index are pronormal and obtain the following result.

\begin{cor} Let $G$ be a simple exceptional group of Lie type in characteristic $p$.
Then $G$ contains a non-pronormal subgroup of odd index if and only if $G\cong E^{\varepsilon}_6(p^m)$, where $\varepsilon\in\{+,-\}$ and $q\equiv \varepsilon1\pmod {18}$
or $\varepsilon= +$ and $m$ is not a power of $2$.

\end{cor}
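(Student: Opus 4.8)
To prove the Corollary it is enough, by the results recalled in the Introduction (which establish that every simple exceptional group of Lie type not isomorphic to $E_6(q)$ or ${}^2E_6(q)$ has all its subgroups of odd index pronormal), to prove the Theorem; so I concentrate on $G=E_6^{\varepsilon}(q)$. Since characteristic $2$ is already handled, assume $p$ is odd and set $a=v_2(q-1)$, $b=v_2(q+1)$, so that $\min\{a,b\}=1$. Let $\widetilde G=E_6^{\varepsilon}(q)_{\mathrm{sc}}$ be the simply connected group, $Z=Z(\widetilde G)$ of order $d=\gcd(3,q-\varepsilon1)$, and $\widehat G=\mathrm{Inndiag}(G)$, so that $|\widehat G:G|=d$ and the diagonal automorphisms have order dividing $3$. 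A subgroup has odd index exactly when it contains a fixed Sylow $2$-subgroup $S$. The strategy is to apply the fusion-theoretic criterion from the authors' earlier papers: all subgroups of odd index in $G$ are pronormal if and only if every two overgroups of $S$ that are conjugate in $\widehat G$, or by a field or graph automorphism, and already conjugate in $G$, are conjugate inside their join. Thus everything reduces to deciding whether $\widehat G$-fusion and field/graph-fusion of odd-index overgroups of $S$ is realised by inner elements lying in the relevant joins.

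First I would locate $S$. One computes $v_2(|G|)=6a+4b+3$ for $\varepsilon=+$ and $4a+6b+3$ for $\varepsilon=-$; when $4\mid q-\varepsilon1$ this equals $v_2(|N_G(T)|)$ for the normalizer $N_G(T)=T.W_0$ of a suitable maximal torus $T$ whose relative Weyl group $W_0$ has $2$-part that of $W(E_6)$ (for $\varepsilon=+$ and $q\equiv1\pmod4$ this is the split-torus normalizer $T.W(E_6)$, with $|W(E_6)|=2^7\cdot3^4\cdot5$), so that $S\le T.W_0$ of odd index; in the remaining congruences $S$ lies in a different maximal-rank $2$-local subgroup, to be pinned down by the same $2$-part bookkeeping. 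Since $N_G(S)$ lies in such an $M$, fusion of overgroups of $S$ is governed by the action of $W_0$ together with the diagonal, field and graph automorphisms on $T$ and on the root datum of $E_6$.

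Now the two obstructions. For the diagonal/mod-$9$ part I would analyse, inside $N_{\widehat G}(T)=\widehat T.W_0$, whether an element $w\in W_0$ of order $3$ that moves an overgroup $H\ge S$ can be corrected to an inner element of $\langle H,H^{w}\rangle$: the correction amounts to solving an equation $t^{w}t^{-1}=z$ with $z\in Z$ in the $\mathbb{F}_q$-points of $T$, i.e.\ to lifting the order-$3$ diagonal automorphism through the isogeny $\widetilde T\to\widehat T$, and this lift lies in $G$ precisely when $9\nmid q-\varepsilon1$. When $9\mid q-\varepsilon1$ the diagonal automorphism genuinely separates $H$ from $H^{w}$ inside their join, yielding a non-pronormal subgroup of odd index. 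For the field/graph part (only $\varepsilon=+$) I would use that for an odd prime $r\mid m$ and $q_0=p^{m/r}$ one has $v_2(q^{k}\mp1)=v_2(q_0^{k}\mp1)$ (as $r$ is odd), so the subfield subgroup $G_0=E_6(q_0)$ has odd index; combining $G_0$ with its image under the order-$2$ graph automorphism of $E_6$ produces two odd-index overgroups of $S$ that are conjugate in $G$ but not inside their join. This construction is unavailable for $\varepsilon=-$, where the graph automorphism is absorbed into the twist, and for $\varepsilon=+$ with $m$ a power of $2$, where no field automorphism of odd order exists.

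The converse — that when $9\nmid q-\varepsilon1$ and ($\varepsilon=-$ or $m$ is a power of $2$) every subgroup of odd index is pronormal — is where the main difficulty lies, and I expect it to be the principal obstacle. It requires enumerating all odd-index overgroups of $S$ (parabolic subgroups, maximal-rank subsystem subgroups, and maximal-torus normalizers) and verifying, uniformly across the congruence classes of $q\bmod4$ that dictate the location of $S$, that no $G$-class of such overgroups splits into several $\widehat G$-classes and that all field/graph-fusion is inner within the relevant joins. The delicate bookkeeping is precisely the vanishing, under $9\nmid q-\varepsilon1$, of the order-$3$ obstruction in $T$ attached to every relevant $w\in W_0$, together with the differing role of the graph automorphism for $\varepsilon=+$ versus $\varepsilon=-$; controlling these simultaneously, rather than exhibiting the two families of counterexamples, is the heart of the argument. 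Granting the Theorem, the Corollary follows at once by appending the already-known pronormality for all other exceptional types.
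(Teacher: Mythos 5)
Your reduction of the Corollary to the Theorem (plus the known pronormality of odd-index subgroups in all other exceptional types, cited in the Introduction) is exactly what the paper does, and that step is fine. The difficulty is that essentially everything you then say about how the Theorem would be proved diverges from what actually works. The ``fusion-theoretic criterion'' you invoke --- that pronormality of all odd-index subgroups is equivalent to a statement about overgroups of $S$ being conjugate in $\mathrm{Inndiag}(G)$ or by field/graph automorphisms and inside their join --- is not a lemma from the authors' earlier papers and is not correct as stated. The actual tool is that a subgroup $H\ge S$ is pronormal iff $H$ and $H^g$ are conjugate in $\langle H,H^g\rangle$ for every $g\in N_G(S)$; both obstructions in the paper are produced by \emph{inner} elements $g$ lying in $N_G(S)=S\times R$ or in the centre of a Levi complement, not by outer automorphisms, and the non-pronormal subgroups are not maximal-rank overgroups of $S$ distinguished by outer fusion.

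Concretely, both of your counterexample sketches miss the mechanism. For $9\mid q-\varepsilon 1$ the paper works inside the maximal subgroup $W$ with $E(W)\cong Spin_8^+(q)$ and $W/EC_W(E)\cong Sym_4$: the Sylow $3$-subgroup $P$ of the central torus $(q-\varepsilon 1)^2/(3,q-\varepsilon 1)$ is non-homocyclic, namely $\mathbb{Z}_{3^n}\times\mathbb{Z}_{3^{n-1}}$ with $n>1$, precisely when $9\mid q-\varepsilon 1$, and the subgroup $H=\langle S,x\rangle$ fails the supplement criterion $U=N_U(H)\,[H,U]$ for $U=\Omega_1(P)$. Your proposal to ``lift the order-$3$ diagonal automorphism through the isogeny'' does not engage with this and exhibits no actual subgroup. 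For the subfield case your construction cannot work as described: the graph automorphism commutes with the field automorphism $\varphi$, so it stabilises (a conjugate of) $G_0=C_G(\varphi)$ and produces no second overgroup of $S$ conjugate to it in $G$; the paper instead takes $H=SV_0$, where $V_0$ is the abelian unipotent radical of a parabolic of the subfield subgroup, and conjugates by a Zsigmondy-type element $g$ of the centre of the Levi complement, so that $V_0$ and $V_0^g$ are distinct \emph{normal} subgroups of $\langle H,H^g\rangle$ and hence not conjugate there. Finally, the converse direction --- the case-by-case analysis of the five families of odd-index maximal overgroups of $N_G(S)$, which you yourself identify as the heart of the matter --- is only acknowledged, not argued. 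So while the formal derivation of the Corollary from the Theorem is right, the proposal leaves the Theorem, and hence the Corollary, essentially unproved.
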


The problem of classification of simple nonabelian groups in which all the subgroups of odd index are pronormal is still open for the following simple groups: $PSL_{n}(q)$ and $PSU_{n}(q)$, where $q$ is odd and $n$ is not a power of $2$.

 A more detailed survey of investigations on pronormality of subgroups of odd index in finite (not necessary simple) groups can be found in survey papers~\cite{GR_Surv,KMR_Surv}. These surveys contain new results and some conjectures and open problems. One such open problem involves the classification of direct products of nonabelian simple groups in which the subgroups of odd index are pronormal.

\section{ Preliminaries}

Our terminology and notation are mostly standard and could be found, for example, in \cite{Car, Atlas, G, GLS, KL}.

Given a set $\pi$ of primes, $\pi'$ stands for the set of all primes not in $\pi$. Also, if $n$ is a positive integer,
then $n_\pi$ is the largest natural divisor of $n$ such that all the prime divisors of $n_\pi$ are in $\pi$.

For a group $G$ and a subset $\pi$ of the set of all primes, $O_\pi(G)$ and $Z(G)$ denote the $\pi$-radical (the largest normal $\pi$-subgroup) and the center of $G$. Also we will write $O(G)$ instead of $O_{2'}(G)$.

We denote by $E(G)$ the {\it layer} of $G$, i.\,e. the subgroup of $G$ generated by all components (subnormal quasisimple subgroups) of $G$.

The symmetric group of degree $n$ is denoted by $Sym_n$.

Let $P$ be a $p$-group, where $p$ is a prime, and let $n$ be a positive integer. Following \cite[page~17]{G}, we define

$$\Omega_n(P)=\langle  x\in P \mid |x|\le p^n\rangle\text{ and } \mho^n(P)=\langle x^{p^n}\mid x\in P\rangle. $$

The {\it rank} of an abelian group $G$ is the least number of generators of $G$. It is well-known that the rank of a subgroup of an abelian group $A$ is at most the rank of $A$.

\begin{lem}\label{p'AutOfAbelianp}  {\rm \cite[Chapter~5, Theorems~2.3 and~2.4]{G}}
Let $P$ be an abelian $p$-group, where $p$ is a prime, and let $A$ be a $p'$-subgroup of $\Aut(P)$. Then
$$
P=C_P(A)\times [A,P],
$$
and if $A$ acts trivially on $\Omega_1(P)$, then $A=1$.
\end{lem}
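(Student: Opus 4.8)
The plan is to treat the two assertions in turn, using the coprimality of $|A|$ and $|P|$ throughout. Since $P$ is abelian, the map $N\colon P\to P$ given by $N(x)=\prod_{a\in A}x^{a}$ is a homomorphism, and its image lies in $C_P(A)$ because reindexing the product by $b\in A$ shows $N(x)^{b}=N(x)$. I would first record two facts about $N$. On $C_P(A)$ one has $N(y)=y^{|A|}$, and since $\gcd(|A|,|P|)=1$ the power map $y\mapsto y^{|A|}$ is an automorphism of the $p$-group $P$; hence $N$ maps $C_P(A)$ onto itself. On the other hand, $A$ acts trivially on the quotient $P/[A,P]$ (as $x^{-1}x^{a}\in[A,P]$ for all $x\in P$, $a\in A$), so on that quotient $N$ induces the $|A|$-power map, which is again injective. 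Together with the obvious inclusion $[A,P]\subseteq\ker N$, this identifies $\ker N$ exactly with $[A,P]$.

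Granting these, the decomposition follows by an explicit averaging formula. Choose $m$ with $m|A|\equiv 1$ modulo the exponent of $P$. For $x\in P$ write
$$
x=N(x)^{m}\cdot\bigl(x\,N(x)^{-m}\bigr).
$$
The first factor lies in $C_P(A)$, while $N\bigl(x\,N(x)^{-m}\bigr)=N(x)N(x)^{-1}=1$ shows the second factor lies in $\ker N=[A,P]$; thus $P=C_P(A)\,[A,P]$. For triviality of the intersection, any $y\in C_P(A)\cap[A,P]$ satisfies $y^{|A|}=N(y)=1$, and since $y$ has $p$-power order with $p\nmid|A|$ we get $y=1$. As $P$ is abelian, $P=C_P(A)\times[A,P]$.

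For the final assertion I would argue by contradiction. Suppose $A$ acts trivially on $\Omega_1(P)$ but $A\neq 1$. Since $A\le\Aut(P)$ acts faithfully, $A$ does not centralize $P$, so $[A,P]\neq 1$. The subgroup $[A,P]$ is $A$-invariant and, being a nontrivial abelian $p$-group, has $\Omega_1([A,P])=\Omega_1(P)\cap[A,P]\neq 1$. By hypothesis $A$ fixes every element of $\Omega_1(P)$, hence every element of $\Omega_1([A,P])$, so $C_{[A,P]}(A)\neq 1$. But the direct decomposition gives $C_{[A,P]}(A)=C_P(A)\cap[A,P]=1$, a contradiction. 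Therefore $[A,P]=1$ and $A=1$.

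The steps are all elementary; the only point requiring genuine care is the identification $\ker N=[A,P]$, where one must use coprimality to conclude that the $|A|$-power map is injective on the $p$-group $P/[A,P]$. Everything else is bookkeeping with the norm map $N$, together with the standard fact that $\Omega_1$ of a subgroup of an abelian $p$-group is obtained by intersecting with $\Omega_1$ of the whole group.
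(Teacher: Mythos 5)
Your proof is correct and complete. The paper itself offers no argument for this lemma --- it is quoted from Gorenstein's book (Chapter 5, Theorems 2.3 and 2.4) --- and your norm-map computation is essentially the classical proof found there: the identification $\ker N=[A,P]$ via injectivity of the $|A|$-power map on $P/[A,P]$, the averaging decomposition $x=N(x)^m\cdot xN(x)^{-m}$, and the observation that $\Omega_1([A,P])=\Omega_1(P)\cap[A,P]$ lands inside $C_P(A)\cap[A,P]=1$ are all exactly the steps one needs, and each is justified correctly using coprimality. Nothing to add.
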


\begin{lem}{\rm \cite[Lemma~5]{VR}} \label{NormSyl} Suppose that $G$ is a group and $H \le G$. Assume also that $H$ contains a Sylow subgroup $S$ of $G$. Then $H$ is pronormal in $G$ if and only if the subgroups $H$ and $H^g$ are conjugate in $\langle H, H^g \rangle$ for every $g \in N_G(S)$.

\end{lem}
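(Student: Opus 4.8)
The plan is to treat the two implications separately, the forward one being immediate and the converse resting on a Sylow-theoretic reduction carried out inside the group $\langle H, H^g\rangle$. If $H$ is pronormal in $G$, then by definition $H$ and $H^g$ are conjugate in $\langle H, H^g\rangle$ for \emph{every} $g\in G$, and in particular for every $g\in N_G(S)$; so the forward implication needs no argument, and all the work lies in the converse.

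For the converse, suppose $H$ and $H^g$ are conjugate in $\langle H, H^g\rangle$ whenever $g\in N_G(S)$, and fix an arbitrary $g\in G$. I would set $K=\langle H, H^g\rangle$ and let $p$ be the prime for which $S$ is a Sylow $p$-subgroup of $G$. The key observation is that, since $S\le H\le K$ and $S$ is a Sylow $p$-subgroup of $G$, the subgroup $S$ is also a Sylow $p$-subgroup of $K$; likewise $S^g\le H^g\le K$ is a Sylow $p$-subgroup of $K$. By Sylow's theorem applied \emph{inside} $K$, there exists $c\in K$ with $(S^g)^c=S$, that is, $S^{gc}=S$. Putting $x=gc$, we obtain an element $x\in N_G(S)$ to which the hypothesis applies.

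Next I would invoke the hypothesis for this $x$: the subgroups $H$ and $H^x$ are conjugate in $\langle H,H^x\rangle$, say $H^d=H^x$ for some $d\in\langle H,H^x\rangle$. Since $c\in K$ and $H^g\le K$, we have $H^x=(H^g)^c\le K$, whence $\langle H,H^x\rangle\le K$ and in particular $d\in K$. Combining $H^d=H^x=(H^g)^c$ and conjugating by $c^{-1}$ yields $H^{dc^{-1}}=H^g$ with $dc^{-1}\in K$; thus $H$ and $H^g$ are conjugate in $K=\langle H,H^g\rangle$. As $g$ was arbitrary, this shows $H$ is pronormal in $G$.

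There is no substantial obstacle in this argument: the whole content is the passage from an arbitrary $g$ to an element $x\in N_G(S)$ by Sylow conjugacy performed within $\langle H,H^g\rangle$ rather than within the full group $G$, which is precisely what lets the normalizer hypothesis be applied without enlarging the ambient group. The one point demanding care is bookkeeping: one must check that every conjugating element produced along the way lies in $K$, so that the final conjugacy genuinely takes place inside $\langle H,H^g\rangle$; this is guaranteed because $c\in K$ and $\langle H,H^x\rangle\le K$.
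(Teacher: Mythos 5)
Your proof is correct, and it is exactly the standard argument: the paper itself gives no proof of this lemma, citing it as Lemma~5 of Vdovin--Revin \cite{VR}, whose proof proceeds by the same reduction you use --- observing that $S$ and $S^g$ are both Sylow subgroups of $K=\langle H,H^g\rangle$, conjugating $S^g$ to $S$ inside $K$ to produce $x=gc\in N_G(S)$, and then transporting the hypothesized conjugacy for $x$ back by $c^{-1}$. Your bookkeeping (that $c\in K$, $\langle H,H^x\rangle\le K$, and hence $dc^{-1}\in K$) is precisely the point that needs checking, and you have it right.
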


\begin{lem}{\rm \cite[Lemma~5]{KMR_1}} \label{Overgroup} Suppose that $H$ and $M$ are subgroups of a group  $G$ and $H \le M$. Then

$(1)$ if $H$ is pronormal in $G$, then $H$ is pronormal in  $M${\rm;}

$(2)$ if $S \le H$ for some Sylow subgroup $S$ of $G$, $N_G(S) \le M$, and $H$ is pronormal in $M$, then $H$ is pronormal in $G$.

\end{lem}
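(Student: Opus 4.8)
The plan is to prove the two parts separately, with part $(1)$ following directly from the definition of pronormality and part $(2)$ resting essentially on Lemma~\ref{NormSyl}. For part $(1)$ I would argue straight from the definition. Let $m \in M$ be arbitrary. Since $H \le M$ and $m \in M$, the conjugate $H^m$ again lies in $M$, and hence $\langle H, H^m \rangle \le M$. Because $H$ is pronormal in $G$, there exists an element $c \in \langle H, H^m \rangle$ with $H^c = H^m$. As $c$ already lies in $\langle H, H^m \rangle$, the very same witness shows that $H$ and $H^m$ are conjugate in $\langle H, H^m \rangle$, which is exactly the pronormality condition inside $M$. Since $m$ was arbitrary, $H$ is pronormal in $M$.

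For part $(2)$ the crucial point is that $H$ contains the Sylow subgroup $S$ of $G$, so Lemma~\ref{NormSyl} applies: $H$ is pronormal in $G$ if and only if $H$ and $H^g$ are conjugate in $\langle H, H^g \rangle$ for every $g \in N_G(S)$. I would therefore only need to verify this conjugacy condition for elements $g \in N_G(S)$. By hypothesis $N_G(S) \le M$, so every such $g$ lies in $M$; and since $H$ is pronormal in $M$, the subgroups $H$ and $H^g$ are conjugate in $\langle H, H^g \rangle$ for each $g \in M$, in particular for each $g \in N_G(S)$. Applying Lemma~\ref{NormSyl} in the reverse direction then yields that $H$ is pronormal in $G$.

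The only genuinely substantive ingredient is the reduction supplied by Lemma~\ref{NormSyl} in part $(2)$; once one accepts that containing a full Sylow subgroup allows the pronormality test to be restricted to $N_G(S)$, both implications follow by unwinding the definitions. I do not expect any real obstacle beyond carefully tracking that the conjugating elements remain inside the relevant joins $\langle H, H^g \rangle$, which is automatic in both arguments since these joins are contained in $M$ whenever $g \in M$.
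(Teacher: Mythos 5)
Your proof is correct, and since the paper only cites this lemma from \cite{KMR_1} without reproducing a proof, there is nothing to diverge from: part $(1)$ is indeed immediate from the definition because $\langle H, H^m\rangle\le M$ for $m\in M$, and part $(2)$ is exactly the intended application of Lemma~\ref{NormSyl} combined with the hypothesis $N_G(S)\le M$. This matches the standard argument used in the cited source.
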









We use the following notation. Fix a prime $p$. Let $\X_p$ be the class of all the groups in which a Sylow $p$-supgroup is self-normalized, and let $\Y_p$ be the class of all the groups in which all the subgroups, whose indices are coprime to $p$, are pronormal.

\begin{lem}\label{N_G(S)=S} {\rm \cite[Lemma~2]{KMR_3}} The following statements hold{\rm:}

$(1)$ If $X\unlhd Y$, $X\in \X_p$, and $Y/X\in \X_p$, then $Y\in \X_p$.

$(2)$ If $X\le Y\in \X_p$ and the index $|Y:X|$ is coprime to $p$, then $X$ is pronormal in $Y$.

\end{lem}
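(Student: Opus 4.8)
The plan is to prove the two statements separately, using elementary Sylow theory together with the pronormality criterion in Lemma~\ref{NormSyl}. Recall that $G\in\X_p$ means exactly that $N_G(S)=S$ for a (equivalently, any) Sylow $p$-subgroup $S$ of $G$.

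For part $(1)$ I would fix a Sylow $p$-subgroup $S$ of $Y$ and aim to show $N_Y(S)=S$. The two key observations are that $S\cap X$ is a Sylow $p$-subgroup of $X$ (since $X\unlhd Y$) and that $SX/X$ is a Sylow $p$-subgroup of $Y/X$, both being standard facts for normal subgroups. Now take $g\in N_Y(S)$. Passing to $Y/X$, the coset $gX$ normalizes $SX/X$; since $Y/X\in\X_p$, this forces $gX\in SX/X$, i.e. $g\in SX$. Writing $g=sx$ with $s\in S$ and $x\in X$, the element $x=s^{-1}g$ again normalizes $S$, and as an element of the normal subgroup $X$ it normalizes $X$ and hence $S\cap X$. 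Because $X\in\X_p$, we obtain $x\in N_X(S\cap X)=S\cap X\le S$, whence $g=sx\in S$. Thus $N_Y(S)=S$ and $Y\in\X_p$.

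For part $(2)$, since $|Y:X|$ is coprime to $p$, a Sylow $p$-subgroup $S$ of $X$ is also a Sylow $p$-subgroup of $Y$, so $X$ contains a Sylow $p$-subgroup of $Y$. By Lemma~\ref{NormSyl} it then suffices to check that $X$ and $X^g$ are conjugate in $\langle X,X^g\rangle$ for every $g\in N_Y(S)$. But $Y\in\X_p$ gives $N_Y(S)=S\le X$, so every such $g$ already lies in $X$; hence $X^g=X$ and the required conjugacy holds trivially. Therefore $X$ is pronormal in $Y$.

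Both arguments are short. The only place requiring care is the Frattini-style bookkeeping in part $(1)$: one must be sure that the chosen intersection $S\cap X$ and image $SX/X$ are genuinely Sylow subgroups of $X$ and of $Y/X$, so that the self-normalizing hypotheses $X\in\X_p$ and $Y/X\in\X_p$ can be invoked. Beyond this routine verification I expect no real obstacle.
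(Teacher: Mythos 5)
Your proof is correct: part $(1)$ is the standard Frattini-style argument showing that self-normalization of Sylow $p$-subgroups is closed under extensions, and part $(2)$ follows immediately from Lemma~\ref{NormSyl} since $N_Y(S)=S\le X$. The paper itself only cites this lemma from \cite{KMR_3} without reproducing a proof, and your argument is essentially the standard one given there.
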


\begin{lem}\label{HV=>VB}{\rm \cite[Lemma~6]{GMR}}
Let $N$ be a normal subgroup of a group $G$ such that $G/N\in\X_p$, and let $H$ be a subgroup of $G$ whose index in $G$ is coprime to $p$. Then $H$ is pronormal in $G$
if and only if $H$ is pronormal in $HN$.
\end{lem}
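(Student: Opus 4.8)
The plan is to prove the two implications separately, with the forward implication essentially free and the reverse one carrying all the content. For the \emph{only if} direction, suppose $H$ is pronormal in $G$. Since $N\unlhd G$, the product $HN$ is a subgroup of $G$ sitting between $H$ and $G$, so Lemma~\ref{Overgroup}(1) applied to the chain $H\le HN\le G$ immediately yields that $H$ is pronormal in $HN$. Notably, this direction uses neither the hypothesis $G/N\in\X_p$ nor the coprimality of the index.

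For the \emph{if} direction, assume $H$ is pronormal in $HN$ and aim to deduce pronormality in $G$. The first step is to exploit the index hypothesis: because $|G:H|$ is coprime to $p$, we have $|H|_p=|G|_p$, so a Sylow $p$-subgroup $S$ of $H$ is in fact a Sylow $p$-subgroup of $G$; thus $S\le H$ with $S$ Sylow in $G$. The strategy is then to invoke Lemma~\ref{Overgroup}(2) with the overgroup $M=HN$. That lemma requires precisely two inputs: $S\le H$ (already secured) and $N_G(S)\le HN$. Since $H$ is pronormal in $HN$ by assumption, the entire reverse implication reduces to verifying the single containment $N_G(S)\le HN$.

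Establishing $N_G(S)\le HN$ is the crux, and it is the only place the hypothesis $G/N\in\X_p$ is used; it is not a deep obstacle but merely a careful quotient argument. Passing to $\overline G=G/N$ via the natural projection $\pi\colon G\to\overline G$, the image $\overline S=\pi(S)=SN/N$ is a Sylow $p$-subgroup of $\overline G$, and since $\overline G\in\X_p$ this Sylow subgroup is self-normalizing, i.e. $N_{\overline G}(\overline S)=\overline S$. Now for any $g\in N_G(S)$, the element $g$ normalizes both $S$ and $N$ (the latter since $N\unlhd G$), hence normalizes $SN$, so $\pi(g)$ normalizes $\overline S=SN/N$. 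Therefore $\pi(g)\in N_{\overline G}(\overline S)=SN/N$, which forces $g\in SN$; this gives $N_G(S)\le SN$. As $S\le H$ implies $SN\le HN$, we conclude $N_G(S)\le HN$, and then Lemma~\ref{Overgroup}(2) delivers that $H$ is pronormal in $G$, completing the proof. The essential point to present carefully is that the role of $\X_p$ is exactly to collapse $N_G(S)$ into the preimage $SN$ of the (self-normalizing) Sylow subgroup of the quotient, where normality of $N$ and the coprimality of $|G:H|$ combine to put $N_G(S)$ inside $HN$.
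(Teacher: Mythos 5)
Your argument is correct and complete: the forward direction is immediate from Lemma~\ref{Overgroup}(1), and the reverse direction correctly reduces to the containment $N_G(S)\le SN\le HN$, which you derive properly from the self-normalization of the Sylow $p$-subgroup $SN/N$ of $G/N$. The paper itself gives no proof of this lemma --- it is quoted from \cite[Lemma~6]{GMR} --- but your argument is exactly the natural one via Lemma~\ref{Overgroup}(2) and matches the standard proof of that cited result.
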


\begin{lem}\label{Important} {\rm \cite[Theorem~1]{GMR}}
Let $G$ be a group and $A$ be a normal subgroup of $G$ such that $A\in\Y_p$ and $G/A\in \X_p$.
Let $T$ be a Sylow $p$-subgroup of $A$. Then  $G\in \Y_p$ if and only if $N_G(T)/T \in \Y_p$.

\end{lem}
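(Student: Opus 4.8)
The plan is to prove the equivalence through the chain $G\in\Y_p \Longleftrightarrow M\in\Y_p \Longleftrightarrow M/T\in\Y_p$, where $M:=N_G(T)$. I fix a Sylow $p$-subgroup $S$ of $G$ with $T=S\cap A$; since $A\unlhd G$, the group $S$ normalizes $S\cap A=T$, so $S\le M$ and $S$ is a Sylow $p$-subgroup of $M$ as well. The Frattini argument gives $G=A\,M$, whence $|G:M|=|A:N_A(T)|$ equals the number of Sylow $p$-subgroups of $A$ and is therefore coprime to $p$.

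The equivalence $M\in\Y_p\Longleftrightarrow M/T\in\Y_p$ is the soft part. As $T$ is a normal $p$-subgroup of $M$, it lies in every Sylow $p$-subgroup of $M$, hence in every subgroup of $M$ of index coprime to $p$. Thus $U\mapsto U/T$ is a bijection between the subgroups of index coprime to $p$ in $M$ and those in $M/T$, and since $T\le U$ and $T\unlhd M$, pronormality of $U$ in $M$ descends to and lifts from pronormality of $U/T$ in $M/T$ (the relevant conjugating elements pass through the quotient in both directions). The implication $G\in\Y_p\Rightarrow M\in\Y_p$ is equally direct: a subgroup $K\le M$ of index coprime to $p$ in $M$ has index coprime to $p$ in $G$, so it is pronormal in $G$ and hence, by Lemma~\ref{Overgroup}(1), pronormal in $M$.

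The heart of the matter is $M\in\Y_p\Rightarrow G\in\Y_p$. Let $H\le G$ have index coprime to $p$; I may assume $S\le H$. By Lemma~\ref{HV=>VB} (applied with $N=A$, using $G/A\in\X_p$) it suffices to prove that $H$ is pronormal in $G_1:=HA$. The Frattini argument inside $H$ (for $H\cap A\unlhd H$ with Sylow $p$-subgroup $T$) yields $H=(H\cap A)\,H^*$, where $H^*:=N_H(T)=H\cap M$; consequently $G_1=H^*A$, and Dedekind's law gives $N_{G_1}(T)=M\cap G_1=H^*N_A(T)$. By Lemma~\ref{NormSyl} pronormality of $H$ in $G_1$ need only be tested against $g\in N_{G_1}(S)\le N_{G_1}(T)=H^*N_A(T)$; writing $g=h^*\nu$ with $h^*\in H^*\le H$ and $\nu\in N_A(T)$, we get $H^g=H^\nu$. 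So everything reduces to showing that $H$ and $H^\nu$ are conjugate in $W:=\langle H,H^\nu\rangle$ for each $\nu\in N_A(T)$, where $A\in\Y_p$ controls the ``$A$-part'' $H\cap A$ (pronormal in $A$, being of index coprime to $p$) and $M\in\Y_p$ controls the ``$M$-part'' $H^*$ (pronormal in $M$).

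The main obstacle is to merge these two conjugations into a single element of $W$, since a priori an element of $A$ realizing the $A$-part conjugacy need not respect $H^*$, and conversely. I expect to resolve this as follows. Since $H\cap A$ and $(H\cap A)^\nu$ both contain the Sylow $p$-subgroup $T$ of $A$, pronormality of $H\cap A$ in $A$ produces a conjugating element in $\langle H\cap A,(H\cap A)^\nu\rangle\le W$, which, after adjustment by an element of $(H\cap A)^\nu$ (as $T$ and its image are Sylow $p$-subgroups of $(H\cap A)^\nu$), may be taken to lie in $N_A(T)\cap W$; call it $a$. Replacing $\nu$ by $\nu a^{-1}\in N_A(T)$ alters neither the $W$-conjugacy class of $H$ nor that of the target (because $a\in W$), so I may assume $\nu$ normalizes $H\cap A$. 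Now the $A$-part no longer moves, $H^\nu=(H\cap A)(H^*)^\nu$, and pronormality of $H^*$ in $M$ supplies $y\in\langle H^*,(H^*)^\nu\rangle\le W$ with $(H^*)^y=(H^*)^\nu$. The decisive point is that both $H^*=N_H(T)$ and $(H^*)^\nu$ normalize $H\cap A$ (the former because $H\cap A\unlhd H$, the latter because $\nu$ now normalizes $H\cap A$), so $y$ normalizes $H\cap A$ automatically. Hence $H^y=(H\cap A)(H^*)^\nu=H^\nu$ with $y\in W$, which is the required conjugacy and completes both the backward implication and the theorem.
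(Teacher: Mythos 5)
The paper itself offers no proof of this lemma: it is quoted verbatim as Theorem~1 of the cited paper [GMR], so there is no in-text argument to compare yours against. Judged on its own, your proof is complete and correct, and it follows what is essentially the route of the original source: the two ``soft'' equivalences ($G\in\Y_p\Rightarrow N_G(T)\in\Y_p$ via the coprimality of $|G:N_G(T)|$ and Lemma~\ref{Overgroup}(1); $N_G(T)\in\Y_p\Leftrightarrow N_G(T)/T\in\Y_p$ via the index-preserving bijection $U\mapsto U/T$ on overgroups of the normal $p$-subgroup $T$), followed by the genuinely hard implication $N_G(T)\in\Y_p\Rightarrow G\in\Y_p$. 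All the delicate points in that last implication check out: Lemma~\ref{HV=>VB} legitimately reduces to $HA$; the Frattini decomposition $H=(H\cap A)N_H(T)$ and the Dedekind identity $N_{HA}(T)=N_H(T)N_A(T)$ are valid; the conjugator $d$ supplied by pronormality of $H\cap A$ in $A$ can indeed be corrected by an element of $(H\cap A)^\nu$ to land in $N_A(T)\cap\langle H,H^\nu\rangle$, which justifies the reduction to $\nu$ normalizing $H\cap A$ without leaving $\langle H,H^\nu\rangle$; and the final conjugator $y$ coming from pronormality of $N_H(T)$ in $N_G(T)$ automatically normalizes $H\cap A$ because both $N_H(T)$ and its $\nu$-conjugate do, so $H^{y}=H^{\nu}$ after composing with the earlier correction. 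This is a clean, self-contained reconstruction of the cited result; the only stylistic remark is that the gluing step deserves the explicit bookkeeping of where each conjugating element lives (all of them in $\langle H,H^g\rangle$), which you do supply.
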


\begin{lem}{\rm \cite[Theorem~1]{KMR_2}} \label{CrterionOfProSub}
Let $H$ and $V$ be subgroups of a group $G$ such that
$V$ is an abelian normal subgroup of $G$ and
 $G=HV$. Then the following statements are equivalent{\rm:}

$(1)$ $H$ is pronormal in $G${\rm;}

$(2)$  $U=N_U(H)[H,U]$ for each $H$-invariant subgroup $U$ of $V$.

\end{lem}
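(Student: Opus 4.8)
The plan is to exploit the splitting $G=HV$ to reduce the pronormality of $H$ to a statement about the conjugates $H^v$ with $v\in V$, and then to translate these into the factorizations appearing in (2). First I would record two elementary facts. Since $G=HV$, every $g\in G$ factors as $g=hv$ with $h\in H$, $v\in V$, whence $H^g=H^{hv}=H^v$; so $H$ is pronormal in $G$ if and only if $H$ and $H^v$ are conjugate in $\langle H,H^v\rangle$ for every $v\in V$. Second, for fixed $v\in V$ I would compute $\langle H,H^v\rangle$. From $v^{-1}hv=h[h,v]$ and the fact that $V$ is abelian one gets $[h,v]\in V$ for all $h\in H$, so $H^v\subseteq H\langle[h,v]\mid h\in H\rangle$ and each $[h,v]$ lies in $\langle H,H^v\rangle$. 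Writing $U_v$ for the smallest $H$-invariant subgroup of $V$ containing all the $[h,v]$, this gives $\langle H,H^v\rangle=HU_v$. Moreover, if $U_0=\langle v^h\mid h\in H\rangle$ denotes the $H$-invariant subgroup of $V$ generated by $v$, then the identity $[h',ab]=[h',a][h',b]$, valid in the abelian group $V$, yields $[H,U_0]=U_v$.

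For the implication $(2)\Rightarrow(1)$, I would fix $v\in V$, set $U_0=\langle v^h\mid h\in H\rangle$, and apply the hypothesis to this $H$-invariant subgroup to get $U_0=N_{U_0}(H)[H,U_0]$. Writing $v=nc$ with $n\in N_{U_0}(H)\le N_V(H)$ and $c=n^{-1}v\in[H,U_0]=U_v$, the element $n^{-1}$ normalizes $H$, so $H^c=H^{n^{-1}v}=(H^{n^{-1}})^v=H^v$, while $c\in U_v\le\langle H,H^v\rangle$. Thus $H$ and $H^v$ are conjugate inside $\langle H,H^v\rangle$, and since $v$ was arbitrary, $H$ is pronormal.

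For the converse $(1)\Rightarrow(2)$, let $U$ be an arbitrary $H$-invariant subgroup of $V$; the inclusion $N_U(H)[H,U]\le U$ is immediate, and $N_U(H)[H,U]$ is a subgroup because $[H,U]$ is normal in $\langle H,U\rangle$. To prove $U\le N_U(H)[H,U]$, I would take $u\in U$, apply pronormality to $v=u$, and use $\langle H,H^u\rangle=HU_u$ together with $U_u\le[H,U]$ (which holds since $u\in U$ and $U$ is $H$-invariant). Pronormality supplies $c\in\langle H,H^u\rangle$ with $H^c=H^u$; writing $c=h''w$ with $h''\in H$ and $w\in U_u$ gives $H^w=H^u$, hence $H^{wu^{-1}}=H$ and so $wu^{-1}\in N_V(H)\cap U=N_U(H)$. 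Therefore $u=(uw^{-1})w\in N_U(H)[H,U]$, which yields the required equality.

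The main work, and essentially the only place requiring care, is the commutator bookkeeping identifying $\langle H,H^v\rangle$ with $H[H,U_0]$ and controlling the subgroups $U_v$ and $U_u$: one must verify $U_v=[H,U_0]$ and $U_u\le[H,U]$, using that $V$ is abelian and that the relevant subgroups are $H$-invariant. Once these computations are in place, both implications follow from the single observation that conjugation of $H$ by an element $hw$ with $h\in H$ and $w\in V$ coincides with conjugation by $w$, combined with the correct choice of $H$-invariant subgroup in each direction—namely $\langle v^h\mid h\in H\rangle$ for $(2)\Rightarrow(1)$ and $U$ itself for $(1)\Rightarrow(2)$.
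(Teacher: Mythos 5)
Your argument is correct and complete. Note that the paper itself offers no proof of this lemma --- it is quoted verbatim from \cite[Theorem~1]{KMR_2} --- so there is nothing in the present text to compare against; judged on its own, your proof is sound. The key reductions all check out: since $G=HV$ one has $H^g=H^v$ for $g=hv$; the subgroup $U_v$ generated by the $H$-conjugates of the commutators $[h,v]$ satisfies $\langle H,H^v\rangle=HU_v$ and $U_v=[H,\langle v^H\rangle]$ (your commutator identities are valid because $V$ is abelian, so conjugation by elements of $V$ is trivial on $[h,v]\in V$); and the two implications then follow exactly as you describe, with the choice $U_0=\langle v^h\mid h\in H\rangle$ in one direction and $U$ itself in the other. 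Two cosmetic remarks: the containment $[h,v]\in V$ comes from normality of $V$ in $G$, not from $V$ being abelian; and the product $N_U(H)[H,U]$ is a subgroup simply because both factors lie in the abelian group $V$, so no appeal to normality of $[H,U]$ in $\langle H,U\rangle$ is needed.
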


To prove the main result of this paper we need two following easy-proving assertions on finite fields.

\begin{lem}\label{Fields1} Let $q$ be a power of an odd prime. Let $\alpha$ be a generator of a Sylow $2$-subgroup of the multiplicative group $\mathbb{F}_{q^2}^*$ of the field  $\mathbb{F}_{q^2}$. Then the elements
$1,\alpha$ form a basis of $\mathbb{F}_{q^2}$ as a vector space over its subfield~$\mathbb{F}_q$.

\end{lem}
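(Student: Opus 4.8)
The plan is to reduce the statement to a single arithmetic fact about $2$-parts. Since $[\mathbb{F}_{q^2}:\mathbb{F}_q]=2$, the field $\mathbb{F}_{q^2}$ is a $2$-dimensional vector space over $\mathbb{F}_q$, so to prove that $1,\alpha$ form a basis it suffices to prove that they are linearly independent over $\mathbb{F}_q$. As $1\neq 0$, this is equivalent to showing that $\alpha\notin\mathbb{F}_q$; indeed, if $\alpha\in\mathbb{F}_q$ then $1$ and $\alpha$ are proportional, while if $\alpha\notin\mathbb{F}_q$ then no relation $c_0\cdot 1+c_1\alpha=0$ with $c_i\in\mathbb{F}_q$ not both zero can hold (solving for $\alpha$ would place it in $\mathbb{F}_q$). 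So the whole task becomes: verify $\alpha\notin\mathbb{F}_q$.

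The key step is to compare the orders of the relevant Sylow $2$-subgroups. By hypothesis $\alpha$ generates a Sylow $2$-subgroup of the cyclic group $\mathbb{F}_{q^2}^*$, so $|\alpha|=(q^2-1)_2$. On the other hand, any nonzero element of $\mathbb{F}_q$ lies in $\mathbb{F}_q^*$, whose order is $q-1$, so its order divides $q-1$ and in particular its $2$-part divides $(q-1)_2$. The point is therefore to show $(q^2-1)_2>(q-1)_2$, i.e. that passing from $\mathbb{F}_q$ to $\mathbb{F}_{q^2}$ strictly enlarges the Sylow $2$-subgroup.

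This last comparison is the only computational content, and it is elementary. Writing $q^2-1=(q-1)(q+1)$ and using that $q$ is odd, the integers $q-1$ and $q+1$ are two consecutive even numbers, so each is divisible by $2$; hence $(q+1)_2\ge 2$ and $(q^2-1)_2=(q-1)_2(q+1)_2\ge 2(q-1)_2>(q-1)_2$. Now I would conclude by contradiction: if $\alpha\in\mathbb{F}_q$, then $|\alpha|=(q^2-1)_2$ is a power of $2$ dividing $q-1$, hence $(q^2-1)_2\le (q-1)_2$, contradicting the inequality just established. Therefore $\alpha\notin\mathbb{F}_q$, and $\{1,\alpha\}$ is a basis of $\mathbb{F}_{q^2}$ over $\mathbb{F}_q$. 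As the authors indicate by calling this an ``easy-proving'' assertion, there is no real obstacle here; the mildest subtlety is simply to record that exactly one of $q\pm1$ is divisible by $4$ while both are even, which already forces $(q+1)_2\ge 2$ and suffices.
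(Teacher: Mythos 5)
Your proof is correct and follows essentially the same route as the paper: reduce to showing $\alpha\notin\mathbb{F}_q$ via $\dim_{\mathbb{F}_q}\mathbb{F}_{q^2}=2$, then compare $2$-parts using $|\mathbb{F}_{q^2}^*|_2=(q+1)_2(q-1)_2\ge 2(q-1)_2>(q-1)_2=|\mathbb{F}_q^*|_2$. No discrepancies worth noting.
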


\begin{proof} Note that $\dim_{\mathbb{F}_{q}}^{}\mathbb{F}_{q^2}=2$. Thus, it is sufficient to prove that $\alpha\notin \mathbb{F}_q$. We have
$$|\mathbb{F}_{q^2}^*|^{}_2=(q+1)_2(q-1)_2\ge 2(q-1)_2>(q-1)_2=|\mathbb{F}_{q}^*|^{}_2.$$ So, a Sylow $2$-subgroup of  $\mathbb{F}_{q^2}^*$ is not contained in  $\mathbb{F}_q$.
\end{proof}

\begin{lem}\label{Fields2}
Let $p$ be an odd prime and $q=p^{2^k}$. Assume that $\alpha$ is a generator of a Sylow $2$-subgroup of the multiplicative group $\mathbb{F}_q^*$ of the field  $\mathbb{F}_q$. Then elements
$1,\alpha,\alpha^2,\dots, \alpha^{2^k-1}$ form a basis of $\mathbb{F}_q$ as a vector space over its subfield~$\mathbb{F}_p$.

\end{lem}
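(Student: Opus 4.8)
The plan is to show that $\alpha$ generates $\mathbb{F}_q$ over $\mathbb{F}_p$, i.e. that $\mathbb{F}_p(\alpha)=\mathbb{F}_q$. Since $q=p^{2^k}$, we have $\dim_{\mathbb{F}_p}^{}\mathbb{F}_q=2^k$, so the $2^k$ elements $1,\alpha,\dots,\alpha^{2^k-1}$ form a basis of $\mathbb{F}_q$ over $\mathbb{F}_p$ if and only if they are linearly independent, which is equivalent to the minimal polynomial of $\alpha$ over $\mathbb{F}_p$ having degree $2^k$, i.e. to $[\mathbb{F}_p(\alpha):\mathbb{F}_p]=2^k$, i.e. to $\mathbb{F}_p(\alpha)=\mathbb{F}_q$. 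Thus everything reduces to proving that $\alpha$ lies in no proper subfield of $\mathbb{F}_q$.

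Next I would invoke the description of the subfield lattice of $\mathbb{F}_q$: the subfields of $\mathbb{F}_{p^{2^k}}$ are exactly the fields $\mathbb{F}_{p^{2^j}}$ with $0\le j\le k$, and they form a chain under inclusion. Consequently $\mathbb{F}_q$ has a unique maximal proper subfield, namely $\mathbb{F}_{p^{2^{k-1}}}$, and every proper subfield is contained in it. Hence to prove $\mathbb{F}_p(\alpha)=\mathbb{F}_q$ it suffices to verify the single condition $\alpha\notin\mathbb{F}_{p^{2^{k-1}}}$. (If $k=0$, the claim is vacuous, since $\mathbb{F}_q=\mathbb{F}_p$ and $1$ is a basis.)

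Finally, set $Q=p^{2^{k-1}}$, so that $q=Q^2$ and $\mathbb{F}_{p^{2^{k-1}}}=\mathbb{F}_Q$. Since $\alpha$ is a generator of a Sylow $2$-subgroup of $\mathbb{F}_{Q^2}^*=\mathbb{F}_q^*$, Lemma~\ref{Fields1} applied with $q$ replaced by $Q$ yields at once that $\alpha\notin\mathbb{F}_Q$, as required. Alternatively, this last step is the same order count as in Lemma~\ref{Fields1}: the order of $\alpha$ equals $(q-1)_2=(Q^2-1)_2=(Q-1)_2(Q+1)_2\ge 2(Q-1)_2>(Q-1)_2=|\mathbb{F}_Q^*|_2$, while $(Q-1)_2$ is the largest $2$-power order of an element of the cyclic group $\mathbb{F}_Q^*$, so $\alpha$ cannot lie in $\mathbb{F}_Q^*$, and $\alpha\ne 0$ gives $\alpha\notin\mathbb{F}_Q$.

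I expect there to be no genuine obstacle here, only one point requiring care: the reduction in the second paragraph, where one must use the chain structure of the subfield lattice of $\mathbb{F}_{p^{2^k}}$ to pass from ``$\alpha$ avoids the maximal proper subfield'' to ``$\alpha$ generates the whole field''. Once this is in place, the result is an immediate consequence of Lemma~\ref{Fields1} together with the dimension count $\dim_{\mathbb{F}_p}^{}\mathbb{F}_q=2^k$.
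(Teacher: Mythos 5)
Your proof is correct, but it takes a genuinely different route from the paper's. The paper argues by induction on $k$: setting $q_0=p^{2^{k-1}}$, it shows via the order count that $\beta=\alpha^2$ generates the Sylow $2$-subgroup of $\mathbb{F}_{q_0}^*$, applies the inductive hypothesis to get a basis $1,\beta,\dots,\beta^{2^{k-1}-1}$ of $\mathbb{F}_{q_0}$ over $\mathbb{F}_p$, applies Lemma~\ref{Fields1} to get the basis $1,\alpha$ of $\mathbb{F}_q$ over $\mathbb{F}_{q_0}$, and multiplies the two bases together to produce $1,\alpha,\dots,\alpha^{2^k-1}$ explicitly. You instead avoid induction altogether: you reduce the claim to $\mathbb{F}_p(\alpha)=\mathbb{F}_q$ via the minimal-polynomial/dimension count, observe that the subfield lattice of $\mathbb{F}_{p^{2^k}}$ is a chain with unique maximal proper subfield $\mathbb{F}_{p^{2^{k-1}}}$, and then need only one application of the Lemma~\ref{Fields1} order count to rule out $\alpha$ lying in that subfield. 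Your argument is shorter and arguably cleaner, at the cost of invoking the structure of the subfield lattice and the standard correspondence between the degree of the minimal polynomial and linear independence of powers; the paper's version is more hands-on and constructs the basis directly from the tower of quadratic extensions, which keeps it self-contained modulo Lemma~\ref{Fields1}. Both are complete proofs, and your handling of the degenerate case $k=0$ and of the hypothesis of Lemma~\ref{Fields1} (that $Q=p^{2^{k-1}}$ is a power of an odd prime) is in order.
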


\begin{proof} We use inductive reasonings with respect to $k$.

The case $k=0$ is trivial, and in the case $k=1$ we refer to Lemma~\ref{Fields1}.

Suppose that $k>1$ and $q_0=p^{2^{k-1}}$. Note that $q_0$ is a square, therefore, $q_0+1 \equiv 2 \pmod{4}$. We have $$|\mathbb{F}_{q}^*|^{}_2=|\mathbb{F}_{q_0^2}^*|^{}_2=(q_0+1)_2(q_0-1)_2= 2(q_0-1)_2=2|\mathbb{F}_{q_0^{}}^*|^{}_2.$$
Therefore, $\alpha^2\in \mathbb{F}_{q_0^{}}^*$. Moreover, $\beta=\alpha^2$ is a generator of the Sylow $2$-subgroup of  $\mathbb{F}_{q_0^{}}^*$.

Now using the inductive hypothesis we prove that the elements $$1,\beta,\beta^2,\dots, \beta^{2^{k-1}-1}$$ form a basis of $\mathbb{F}_{q_0^{}}$ as a vector space over its subfield $\mathbb{F}_p$. Using Lemma~\ref{Fields1} we prove that $$1,\alpha$$ form a basis of $\mathbb{F}_{q}$ over its subfield $\mathbb{F}_{q_0^{}}$. Thus, the elements $$1,\,\alpha,\,{\beta=\alpha^2},\,{\alpha\beta=\alpha^3},\,{\beta^2=\alpha^4},\,{\alpha\beta^2=\alpha^5},\,\dots, \, {\beta^{2^{k-1}-1}=\alpha^{2^{k}-2}},\,{\alpha\beta^{2^{k-1}-1}=\alpha^{2^{k}-1}}$$ form a basis of $\mathbb{F}_{q}$ over its subfield  $\mathbb{F}_{p}$.

\end{proof}

\begin{lem}\label{Char2} {\rm  \cite[Lemma~11]{KMR_1}} Let $G$ be a simple group of Lie type over a field of characteristic $2$.
Then every subgroup of odd index is pronormal in $G$.

\end{lem}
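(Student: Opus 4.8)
The plan is to exploit the fact that in defining characteristic the Sylow $2$-subgroup is a maximal unipotent subgroup, so that its normalizer is a Borel subgroup and the acting torus has odd order. Concretely, since $q$ is even, a Sylow $2$-subgroup $U$ of $G$ is the unipotent radical of a Borel subgroup $B$, i.e. the product of all positive root subgroups, and $N_G(U)=B=U\rtimes T$, where $T$ is a maximally split torus of odd order (since $q$ is even, every factor $q^i\pm1$ occurring in $|T|$ is odd). The torus $T$ normalizes every root subgroup. As pronormality is invariant under conjugation and a subgroup of odd index contains a full Sylow $2$-subgroup, I may assume $U\le H$.

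First I would reduce the verification to the torus. By Lemma~\ref{NormSyl} it suffices to check that $H$ and $H^g$ are conjugate in $\langle H,H^g\rangle$ for every $g\in N_G(U)=B$. Writing $g=ut$ with $u\in U\le H$ and $t\in T$, one gets $H^g=H^t$; moreover $U^t=U\le H\cap H^t$, so $U$ is a common Sylow $2$-subgroup of $H$, $H^t$ and $\langle H,H^t\rangle$. Thus everything comes down to comparing $H$ with its conjugates by the odd-order torus $T$.

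The heart of the argument is to show that in fact $T\le N_G(H)$, i.e. $H^t=H$ for all $t\in T$; the required conjugacy is then trivial and $H$ is pronormal by the reduction above. The key structural observation is that $U$ already contains \emph{all} positive root subgroups. Hence, if $H$ meets some opposite root subgroup $X_{-\alpha}$ nontrivially, then together with the full positive root subgroup $X_\alpha\le U$ it generates the whole rank-one subgroup $\langle X_\alpha,X_{-\alpha}\rangle\cong SL_2(q)$, so $H$ contains $X_{-\alpha}$ entirely as well as the corresponding coroot subtorus. Consequently $H$ is generated by $H\cap B=U(H\cap T)$ together with the full (hence $T$-invariant) rank-one subgroups it contains. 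Since $T$ normalizes each root subgroup and each such rank-one subgroup, and since $H\cap T\le T$, the torus $T$ normalizes every one of these generators and therefore normalizes $H$.

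The step I expect to be the main obstacle is precisely this generation statement: that an overgroup of the maximal unipotent subgroup $U$ is generated by its intersection with the Borel subgroup and by the rank-one subgroups it contains. Unlike the overgroups of a Borel subgroup, which are exactly the parabolic subgroups, overgroups of $U$ are not organized by such a clean lattice, and controlling them requires analyzing, via the Bruhat decomposition, which double cosets $BwB$ meet $H$ and showing that meeting a nontrivial cell forces the presence of the corresponding opposite root subgroups. This is exactly where the defining-characteristic hypothesis is essential: the coincidence of the Sylow $2$-subgroup with the unipotent radical is what lets a single opposite root element recover a whole $T$-invariant rank-one subgroup, whereas in odd characteristic a Sylow $2$-subgroup bears no such relation to the root subgroups, which is why the analogous statement fails there. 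As a safeguard, should the equality $H^t=H$ ever fail, one falls back on the weaker $\langle H,H^t\rangle$-conjugacy directly: two subgroups sharing the common Sylow $2$-subgroup $U$ are conjugate in $\langle H,H^t\rangle$ if and only if they are conjugate by an element of $N_{\langle H,H^t\rangle}(U)\le B$, so it would remain to locate a suitable element of $T\cap\langle H,H^t\rangle$ by a Frattini argument, or to reduce through the parabolic overgroup $\langle H,B\rangle$ via Lemma~\ref{Overgroup}.
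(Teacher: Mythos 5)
First, note that the paper does not actually prove this lemma: it is imported verbatim from \cite[Lemma~11]{KMR_1}, so there is no in-paper argument to compare yours against, and your proposal has to be judged on its own. Your skeleton is sound and goes in a reasonable direction: in characteristic $2$ a Sylow $2$-subgroup $U$ is a maximal unipotent subgroup, $N_G(U)=B=U\rtimes T$ with $T$ of odd order, Lemma~\ref{NormSyl} reduces everything to conjugation by elements of $T$, and if one knew that $T\le N_G(H)$ for every overgroup $H$ of $U$ the lemma would follow at once. The rank-one observation (an overgroup of the full $X_\alpha$ meeting $X_{-\alpha}$ nontrivially contains all of $\langle X_\alpha,X_{-\alpha}\rangle$) is also correct for $SL_2(q)$, though you would still have to check the twisted rank-one groups ($SU_3(q)$, $Sz(q)$, \dots) and the degenerate small cases such as $Sp_4(2)'$ and ${}^2F_4(2)'$, and the identity $H\cap B=U(H\cap T)$ needs a Schur--Zassenhaus conjugation rather than being automatic.

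The genuine gap is the one you flag yourself: the claim that an arbitrary overgroup $H$ of $U$ is generated by $U(H\cap T)$ together with the full rank-one subgroups it contains. This is not a routine verification --- it is essentially the entire content of the lemma. Overgroups of $U$ are not unions of $(B,B)$-double cosets, so ``which cells $BwB$ meet $H$'' does not by itself control $H$, and extracting a full negative root subgroup from a single element of a long cell requires a genuine commutator-relations argument that you do not supply; as written, the proof does not close. There is a standard way to bypass this entirely, using ingredients already quoted in the paper. First, if $K=\langle H,H^g\rangle=G$ there is nothing to prove, since $H^g$ is conjugate to $H$ in $G=K$ by $g$ itself. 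If $K<G$, then $K$ has odd index and lies in a maximal subgroup of odd index, which for groups of Lie type in characteristic $2$ is a parabolic subgroup $P$ by Liebeck--Saxl \cite{LS}; since $P$ contains the Sylow subgroup $U$ it contains the unique Borel subgroup $B=N_G(U)$, so Lemma~\ref{Overgroup}(2) reduces pronormality of $H$ in $G$ to pronormality in $P$. As $O_2(P)\le U\le H$, one passes to $P/O_2(P)$ (a Levi factor, a central product of an odd-order torus with smaller groups of Lie type in characteristic $2$) and inducts on the rank. This route replaces your unproven generation statement with a citation to the classification of maximal subgroups of odd index, which is the price one way or another: some external structural input of this strength is unavoidable here.
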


\begin{lem}\label{MaxOddInSubgrE6} Let $G=E^{\varepsilon}_6(q)$, where $\varepsilon\in\{+,-\}$ and $q$ is odd, and let
$S$ be a Sylow $2$-subgroup of $G$. Then the following statements hold{\rm:}

$(1)$ $Z(S)$ is a cyclic subgroup.

$(2)$ If $t$ is the unique involution from $Z(S)$ and $C=C_G(t)$, then
$$L:=E(C)\cong Spin^{\varepsilon}_{10}(q),$$ $$Z:=C_G(L)\cong{\mathbb Z}_{(q- \varepsilon1)/(3,q- \varepsilon1),} \mbox{ and }$$
$$C/Z\cong {\rm Inndiag}(L/Z(L))\cong P\Omega^{\varepsilon}_{10}(q).{\mathbb Z}_{(4,q- \varepsilon1)}.$$

$(3)$ $N_G(S)=S\times R$, where $R=O(C)\cong {\mathbb Z}_{(q- \varepsilon1)_{2'}/(3,q- \varepsilon1)}$.

$(4)$ If $\varepsilon=-$, then $C$ is a maximal subgroup of $G$.

$(5)$ If $\varepsilon=+$, then $C$ is contained exactly in two parabolic maximal subgroups $P_1$ and $P_2$
of $G${\rm;} moreover, for each $i \in \{1,2\}$, the unipotent radical of $P_i$ is an elementary abelian subgroup of order $q^{16}$, $C$ is a Levi complement of $P_i$, and $P_1$ and $P_2$ are conjugate in $G$ by a graph automorphism of $G$ which normalizes $C$.

$(6)$ $N_G(S)$ is contained in a maximal subgroup $W$ of $G$ such that
$$E:=E(W)\cong Spin^+_8(q)\cong 2^2.P\Omega^+_8(q),$$
$$C_W(E)\cong (q- \varepsilon1)^2/(3,q- \varepsilon1),
\quad
Z(W)=1,$$
$$W/EC_W(E)\cong Sym_4,\quad\text{ and }\quad R<O(W).$$

$(7)$ If $q\equiv\varepsilon1 \pmod 4$, then $N_G(S)$ is contained in a maximal subgroup $N^{\varepsilon}$ of
$G$ and $$N^{\varepsilon}\cong (q- \varepsilon1)^6/(3,q- \varepsilon1). \Aut(U_4(2)).$$

$(8)$ If $q=p^m$, where $p$ is a prime, then for any odd prime divisor $r$ of $m$, $G$ has a field automorphism $\varphi$ of order $r$ such that $$G_0=G_0(r)=C_G(\varphi)$$ is maximal in $G$, $G_0$ contains a subgroup conjugate in $G$ to $S$, and
$$E(G_0)\cong E^{\varepsilon}_6(q_0),\quad \text{ where }\quad q_0^r=q,\quad\text{ and }\quad G_0\le {\rm Inndiag}(E(G_0)).$$

$(9)$ Each maximal subgroup of odd index of $G$ is conjugate in $G$ to one of the following subgroups{\rm:} $W$, $G_0(r)$,
$P_1$ or $P_2$ if $\varepsilon=+$, $C$ if  $\varepsilon=-$, $N^{\varepsilon}$ if $q\equiv\varepsilon 1 \pmod 4$.

\begin{proof} Proof of this follows directly from \cite[Theorens~5--7, Lemma~1.3]{KM}, \cite[Theorem, Paragraph~2]{LS}, \cite[Theorem,
Tables~5.1 and~5.2]{LSS}, \cite[Theorem~4.5.1, Proposition~4.9.1]{GLS}.

\end{proof}

\end{lem}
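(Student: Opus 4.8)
The plan is to assemble the nine assertions from the detailed structure theory of the groups $E_6^\varepsilon(q)$, organising the whole argument around the chain $S \le C = C_G(t) \le G$, where $t$ is the central involution of part~(2) and $S$ is a fixed Sylow $2$-subgroup. The backbone is the description of centralizers of semisimple involutions in \cite[Theorem~4.5.1, Proposition~4.9.1]{GLS} together with the classification of maximal subgroups in \cite{LS,LSS}; the role of \cite{KM} is to supply the precise $2$-local data specific to $E_6^\varepsilon$. I would first settle the $2$-local statements (1)--(3), then locate each of the overgroups named in (4)--(8), and finally cross these against the full maximal-subgroup list to obtain the classification in~(9).

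For (1)--(3): the involution classes of $G$ and their centralizers are read off from \cite[Theorem~4.5.1]{GLS}. The $2$-central class is the one whose centralizer $C$ has layer a $D_5$-type subsystem subgroup $L \cong Spin_{10}^\varepsilon(q)$; reading off $Z = C_G(L)$ and the quotient $C/Z$ then gives the displayed isomorphisms. Since $t$ is $2$-central one has $S \le C$ and $|C|_2 = |G|_2$, so a Sylow $2$-subgroup of $G$ already lies in $C$, and the analysis of $S$ may be carried out inside the more transparent quotient $C/Z \cong P\Omega_{10}^\varepsilon(q).\mathbb{Z}_{(4,q-\varepsilon 1)}$. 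Cyclicity of $Z(S)$ follows from the corresponding fact for orthogonal groups of type $D_5$, and because $t$ is the unique involution of $Z(S)$, every element of $N_G(S)$ fixes $t$; hence $N_G(S) \le C$, and computing $N_C(S)$ from the structure of $C$ yields the splitting $N_G(S) = S \times R$ with $R = O(C)$ of the stated order.

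For (4)--(8): each overgroup is matched with the Liebeck--Seitz--Saxl list. When $\varepsilon = -$ the subgroup $C$ is itself maximal, whereas for $\varepsilon = +$ the $D_5$-Levi is non-maximal and lies in exactly the two end-node parabolics $P_1, P_2$ interchanged by the graph automorphism; abelianity of the unipotent radical and its order $q^{16}$ are read directly from the extended Dynkin diagram of $E_6$. The subgroup $W$ is the normalizer of a $D_4$-type subsystem subgroup, the quotient $Sym_4$ coming from the triality and graph symmetries acting on $Spin_8^+(q)$, while the torus normalizer $N^\varepsilon$ (present when $q \equiv \varepsilon 1 \pmod 4$) is the normalizer of a maximal torus of type $(q-\varepsilon 1)^6$ with full Weyl group $W(E_6) \cong \Aut(U_4(2))$. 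For the field-automorphism subgroups $G_0(r)$ one uses that $C_G(\varphi) \cong E_6^\varepsilon(q_0)$ with $q_0^r = q$ and that, for odd $r$, the quotients $(q_0^{rd}\mp 1)/(q_0^{d}\mp 1)$ are odd, so $|G:C_G(\varphi)|$ is odd and $G_0$ contains a conjugate of $S$. In every case the containment of $N_G(S)$ is confirmed by comparing $2$-parts of orders, using $N_G(S) \le C$ from the previous step.

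The decisive and most laborious part is (9). Here I would take the complete maximal-subgroup list of $E_6^\varepsilon(q)$ from \cite{LS,LSS} --- parabolic, reductive (subsystem and torus-normalizer) and almost-simple ``exotic'' types --- and decide for each whether its index is odd, equivalently whether it contains a full Sylow $2$-subgroup. Since $p$ is odd, the $q$-power factors of every index are odd, so the test reduces to the $2$-adic valuation of the relevant product of cyclotomic factors $q^i - \varepsilon^{?}1$; this is where the congruence conditions on $q$ enter and where the bookkeeping is heaviest. The main obstacle is completeness: one must rule out that any maximal subgroup outside the listed families acquires odd index for some exceptional value of $q$, which requires the detailed order comparisons of \cite{KM} and careful attention to small-rank coincidences. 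Once every family is accounted for, the surviving candidates are precisely $W$, $G_0(r)$, the parabolics $P_1,P_2$ (for $\varepsilon = +$), $C$ (for $\varepsilon = -$) and $N^\varepsilon$ (when $q \equiv \varepsilon 1 \pmod 4$), which is exactly~(9).
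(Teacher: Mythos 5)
Your proposal is correct and follows essentially the same route as the paper, which proves this lemma simply by citing \cite[Theorems~5--7, Lemma~1.3]{KM}, \cite{LS}, \cite[Tables~5.1, 5.2]{LSS} and \cite[Theorem~4.5.1, Proposition~4.9.1]{GLS}; your sketch is a faithful expansion of how those same sources combine (involution-centralizer data from GLS for (1)--(5), the subsystem/torus-normalizer and field-automorphism subgroups matched against the Liebeck--Saxl(--Seitz) lists for (6)--(9), with the $2$-local bookkeeping from Kondrat'ev--Mazurov). No genuinely different decomposition or gap to report.
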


\begin{lem}\label{TorusNormalizer} Suppose that  $q\equiv\varepsilon1 \pmod 4$   and $M=N^{\varepsilon}$ is a maximal subgroup of odd index in $G=E_6^\varepsilon(q)$ which is the normalizer in $G$ of a maximal torus
$$T=T^\varepsilon\cong (q- \varepsilon1)^6/(3,q- \varepsilon1)$$ from Statement $(7)$ of Lemma~{\rm \ref{MaxOddInSubgrE6}}.

Let $\overline{\phantom{x}}:M\rightarrow M/T$ be the natural epimorphism, $S$ be a Sylow $2$-subgroup of $M$ {\rm(}and of~$G${\rm)}, and $t$ be a unique involution from $Z(S)$. Put $$V=\Omega_1(O_2(T)).$$
Then the following statements hold{\rm:}

$(1)$ $\overline{M}\cong \Aut(U_4(2))\cong GO_6^-(2)$.

$(2)$ $C_M(V)=T$ and $\overline{M}$ acts faithfully on $V$.

$(3)$ $V$ is an absolute irreducible $\mathbb{F}_2\overline{M}$-module isomorphic to the natural $\mathbb{F}_2 GO_6^-(2)$-module of dimension~$6$.

$(4)$ Maximal subgroups of $\overline M$ containing $\overline S$ are exhausted by subgroups $\overline{Q_1}$ and $\overline{Q_2}$, where $\overline{Q_1}$ is the stabilizer in $\overline M$ of a totally singular subspace $\langle t\rangle$ of dimension~$1$ from $V$,   $\overline{Q_2}$ is the stabilizer in $\overline M$ of a totally singular subspace $Y$ of dimension $2$ from $V$, $t\in Y$, $$\overline{Q_1}\cong 2^4:Sym_5, \mbox{ and } \mbox{ } \overline{Q_2}\cong (SL_2(3)\circ SL_2(3)).2^2.$$

$(5)$ If $r$ is a prime divisor of $|T|$ and $r>3$, then the subgroup $$T_r=\Omega_1(O_r(T))$$ is an elementary abelian $r$-group of rank~$6$, and $\overline{M}$ acts faithfully and absolutely irreducible on $T_r$ {\rm(}as on a vector space{\rm)}.

\end{lem}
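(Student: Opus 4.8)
The plan is to obtain statement $(1)$ directly from Lemma~\ref{MaxOddInSubgrE6}$(7)$: that lemma gives $M=N^{\varepsilon}\cong(q-\varepsilon1)^6/(3,q-\varepsilon1).\Aut(U_4(2))$ with the displayed torus $T$ as the indicated normal subgroup, so $\overline M=M/T\cong\Aut(U_4(2))$, and it remains only to recall the classical isomorphism $\Aut(U_4(2))\cong GO_6^-(2)$, which is the identification of the Weyl group $W(E_6)$ with $GO_6^-(2)$.

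Next I would record the elementary structure of $V$ and of the groups $T_r$. Since $q\equiv\varepsilon1\pmod4$, we have $4\mid(q-\varepsilon1)$, while $(3,q-\varepsilon1)$ is odd; hence $O_2(T)\cong\mathbb Z_{(q-\varepsilon1)_2}^{\,6}$ and $V=\Omega_1(O_2(T))$ is elementary abelian of rank $6$, a $6$-dimensional space over $\mathbb F_2$. For a prime $r>3$ dividing $|T|$ the factor $(3,q-\varepsilon1)$ is coprime to $r$, so similarly $O_r(T)\cong\mathbb Z_{(q-\varepsilon1)_r}^{\,6}$ and $T_r=\Omega_1(O_r(T))$ is elementary abelian of rank $6$; this already yields the first assertion of $(5)$.

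The heart of the proof, and the step I expect to be the main obstacle, is to identify the $\overline M$-module $V$ (and likewise $T_r$) with the natural module. The key is that the action of $\overline M=N_G(T)/T$ on $T$ is the action of $W(E_6)$ on the character lattice $L$ of $T$ by its reflection representation; passing to the $2$-torsion (resp. the $r$-torsion) identifies $V$ with $L/2L$ and $T_r$ with $L/rL$, each carrying the reduction of the $W(E_6)$-invariant form induced by the Cartan matrix of $E_6$. This holds uniformly in $\varepsilon$, since the distinction between the two signs disappears on reduction, and since the determinant of the Cartan matrix equals $3$ the reduced form is nondegenerate whenever $r\neq3$, in particular for $r=2$ and for every $r>3$. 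For $V$ this realizes $\overline M\cong GO_6^-(2)$ as the full orthogonal group of its natural minus-type module, which is absolutely irreducible; as $T$ is abelian we have $T\le C_M(V)$, and faithfulness on the natural module gives $C_{\overline M}(V)=1$, that is $C_M(V)=T$, proving $(2)$ and $(3)$. For the action on $T_r$ in $(5)$, faithfulness follows from Minkowski's theorem (reduction modulo $r\ge3$ is injective on the finite group $W(E_6)\le GL_6(\mathbb Z)$), while absolute irreducibility follows from the nondegeneracy of the reduced form together with the fact that the reflection representation of $W(E_6)$ stays absolutely irreducible in every characteristic different from $3$; the only delicate case is $r=5$, where $r$ divides $|W(E_6)|$, and there it is confirmed by a direct check of the module structure of $GO_6^-(2)$ over $\mathbb F_5$.

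It then remains to treat $(4)$. Containing the Sylow $2$-subgroup $\overline S$ is the same as having odd index in $\overline M\cong GO_6^-(2)$, so the maximal overgroups of $\overline S$ are exactly the maximal subgroups of odd index. Reading these off from the known maximal subgroups of $U_4(2):2\cong GO_6^-(2)$ leaves precisely two, of indices $27$ and $45$, namely the two maximal parabolics $2^4:Sym_5$ and $(SL_2(3)\circ SL_2(3)).2^2$, which on the natural module are the stabilizers of a singular $1$-space and of a totally singular $2$-space. Finally I would locate $t$: since $V\le O_2(T)\le S$ is a nontrivial normal subgroup of the $2$-group $S$, it meets $Z(S)$, so the unique involution $t$ of the cyclic group $Z(S)$ (Lemma~\ref{MaxOddInSubgrE6}$(1)$) lies in $V$; because $t\in Z(S)$ the line $\langle t\rangle$ is the unique $\overline S$-fixed point of $V$ and is singular, whence $\overline{Q_1}$ is the stabilizer of $\langle t\rangle$ and $\overline{Q_2}$ the stabilizer of the unique $\overline S$-invariant totally singular $2$-space $Y\supset\langle t\rangle$.
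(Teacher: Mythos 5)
Your proof is essentially correct, but it follows a genuinely different route from the paper's for the substantive parts (2), (3) and (5). You identify the action of $\overline M=N_G(T)/T$ on $T$ with the reduction modulo $2$ (resp.\ modulo $r$) of the reflection representation of $W(E_6)$ on the coroot lattice, and then harvest everything from lattice data: the Cartan matrix of determinant $3$ gives a nondegenerate invariant form for $\ell\neq 3$, Minkowski's theorem gives faithfulness for odd $r$, and reduction theory (plus a check at $r=5$) gives absolute irreducibility. The paper instead works entirely inside the abstract group $\Aut(U_4(2))$: it proves $C_M(V)=T$ and faithfulness on $T_r$ by a normal-subgroup argument (a proper normal overgroup of $T$ would have index at most $2$ in $M$, forcing $M\le C_G(t)$ or, in case (5), forcing a subgroup to be normal in the simple group $G$ via a normalizer computation involving $R$ and $C_G(t)$), and it gets irreducibility from the ordinary and Brauer character tables of $U_4(2)$: any faithful irreducible representation of dimension at most $6$ over $\overline{\mathbb F}_\ell$ is the $6$-dimensional natural one. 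Your approach is more conceptual and uniform in $\ell$, and it yields the quadratic form (hence the geometric description of $\overline{Q_1},\overline{Q_2}$ and the singularity of $t$, which you justify more explicitly than the paper does); the paper's approach avoids the one input you assert without proof, namely that the $\overline M$-module $T$ really is $Y/(q-\varepsilon 1)Y$ with the natural $W(E_6)$-action. That identification is standard (for $\varepsilon=+$ the torus is split, and for $\varepsilon=-$ the relevant twist $w\sigma$ acts as $-1$ on $Y\otimes\mathbb R$, so the induced module is the natural one up to sign; the quotient by the centre of order $(3,q-\varepsilon 1)$ does not affect $\ell$-torsion for $\ell\neq 3$), but you should cite or sketch it, since it carries the whole weight of your argument. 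With that reference supplied, your proof is complete and, if anything, sharper on the geometric content of statement (4).
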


\begin{proof} Statement $(1)$ follows directly from Statement~$(7)$ of Lemma~\ref{MaxOddInSubgrE6} and the following isomorphisms $\overline{M}\cong \Aut(U_4(2))\cong GO_6^-(2)$ (see~\cite{Atlas}).

We now prove Statement~$(2)$. Note that $V$ is a characteristic subgroup of $T$. Therefore,  $V\unlhd M$ and $C_M(V)\unlhd M$.
In particular, $V\unlhd S$, therefore a unique involution $t$ from $Z(S)$ belongs to $V$.

Let us prove that $C_M(V)=T$. The inclusion $T\le C_M(V)$ follows from the fact that $T$ is abelian. Now we have that $$\overline{C_M(V)}\unlhd\overline{M}\cong \Aut(U_4(2)).$$ Therefore, if $C_M(V)>T$, then the structure of the group $ \Aut(U_4(2))$
implies that $$|M:C_M(V)|\leq 2 \text{ and }M=C_M(V)S\le C_G(t).$$ This contradicts the maximality of $M$ and Lemma~\ref{MaxOddInSubgrE6}.
Thus, $\overline M$ acts faithfully on $V$.

Now consider Statement~$(3)$. Let $k$ be the algebraic closure of the field $\mathbb{F}_2$. Each composition factor of a faithful ${k}\overline{M}$-module $k\otimes V$ has dimension at most $\dim V=6$. By \cite{AtlasBr}, the group
$\overline{M}\cong\Aut(U_4(2))$ has (up to equivalence) a unique faithful irreducible representation over $k$ whose dimension is at most~$6$. Moreover, this dimension is equal to~$6$ and the natural representation of $GO_6^-(2)$ is absolutely irreducible.
Therefore, $\overline{M}$ acts irreducible on $V$, and the group $V$ as a $\mathbb{F}_2\overline M$-module is isomorphic to the natural $\mathbb{F}_2GO^-_6(2)$-module of dimension~$6$.

We can assume that $V$ is a space of dimension~$6$ over the field of order~$2$ with a non-degenerate quadratic form $Q$ of type~``$-$'' and $\overline M$ is the group of all non-degenerate linear transformations of $V$ stabilizing $Q$. This fact and the list of all maximal subgrous of the group $GO_6^-(2)$ (see, for example, \cite{Atlas}) imply Statement~$(4)$.

Let us prove Statement~$(5)$. The structure of $T$ implies that $T_r$ is an elementary abelian $r$-group of rank $6$ and $C_M(T_r)$ is a normal subgroup of $M$ containing $T$. Suppose that $T<C_M(T_r)$. Then, as in the proof of Statement~(2), we have $|M:C_M(T_r)|\leq 2$ and $M=C_M(T_r)S$ for any Sylow $2$-subgroup $S$ of $M$. By Statement~(7) of Lemma~\ref{MaxOddInSubgrE6}, we can assume that $N_G(S)<M$ and $N_G(S)=S\times R$, where $R=O(C_G(\Omega_1(S)))\cong {\mathbb Z}_{(q- \varepsilon1)_{2'}/(3,q- \varepsilon1)}$. Since
$N_{\overline{M}}(\overline{S})=\overline{S}$ (see \cite{Atlas}), $R$ is contained in $T$ and, therefore,
$R\cap T_r=\Omega_1(O_r(R))\cong {\mathbb Z}_r$. But then the normalizer $N_G(R\cap T_r)$ contains both subgroups $S$ and $C_M(T_r)$. Therefore, $N_G(R\cap T_r)$ contains  $M=C_M(T_r)S$. On the other hand, Lemma~\ref{MaxOddInSubgrE6}(2,3) implies that $N_G(R\cap T_r)=N_G(\Omega_1(O_r(R)))$ contains the subgroup  $C=C_G(t)$. A contradiction to Lemma~\ref{MaxOddInSubgrE6} because the maximality of subgroups $M$ and $C$ in $G$ implies that the subgroup $R\cap T_r$ is normal in the simple group~$G$. Thus, $\overline{M}$ acts faithfully on $T_r$.

Tables of ordinary (see \cite{Atlas}) and $5$-modular Brauer (see \cite{AtlasBr}) irreducible characters of the group
$\Aut(U_4(2))$  imply that the group $\overline{M}'$ has up to equivalence a unique  faithful irreducible representation of degree at most $6$ over the algebraic closure of the field $\mathbb{F}_r$, and its dimension is exactly~$6$. As in the case of characteristic~$2$ (see above), this fact implies that $\overline{M}$ acts irreducible on~$T_r$.

\end{proof}

\section{Examples of non-pronormal subgroups of odd index in groups $E^{\pm}_6(q)$}

\begin{prop}\label{Counterexample9dividesq-1} Let $G=E^{\varepsilon}_6(q)$, where $\varepsilon\in\{+,-\}$ and
$q$ is odd. If $9$ divides $q-\varepsilon1$, then $G$  contains a non-pronormal subgroup of odd index.
\end{prop}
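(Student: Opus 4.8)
The plan is to exhibit an explicit non-pronormal subgroup of odd index by working inside a concrete maximal subgroup of odd index and applying the abelian criterion of Lemma~\ref{CrterionOfProSub}. The natural place to look is the maximal subgroup $N^{\varepsilon}\cong(q-\varepsilon1)^6/(3,q-\varepsilon1).\Aut(U_4(2))$ from Lemma~\ref{MaxOddInSubgrE6}(7), which exists when $q\equiv\varepsilon1\pmod4$; since $9\mid q-\varepsilon1$ forces $q\equiv\varepsilon1\pmod3$ and (together with oddness) one checks $q\equiv\varepsilon1\pmod4$ holds in the relevant cases, this torus normalizer $M=N^\varepsilon$ is available. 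The hypothesis $9\mid q-\varepsilon1$ means the abelian torus $T\cong(q-\varepsilon1)^6/(3,q-\varepsilon1)$ retains a nontrivial $3$-part even after dividing by $(3,q-\varepsilon1)$, so $T$ contains a large elementary abelian $3$-subgroup on which the complement $\overline M\cong\Aut(U_4(2))$ acts.

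First I would pass to the prime $r=3$ and set $V=\Omega_1(O_3(T))$, an elementary abelian $3$-group; because $9\mid q-\varepsilon1$, this has rank $6$ (the factor $(3,q-\varepsilon1)=3$ only removes one power of $3$, not the whole $3$-part). The action of $\overline M\cong\Aut(U_4(2))$ on $V$ should be analyzed exactly as in Lemma~\ref{TorusNormalizer}: I expect $\overline M$ to act on $V$ as on a $6$-dimensional $\mathbb{F}_3$-module, and I would need its irreducible constituents — note that Lemma~\ref{TorusNormalizer}(5) is stated only for $r>3$, so the case $r=3$ must be handled separately and is genuinely different, since the defining characteristic-style $3$-modular representation theory of $U_4(2)\cong PSp_4(3)$ is richer. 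Next, I would choose a suitable subgroup $H$ of $M$ of odd index containing a Sylow $2$-subgroup $S$ of $G$, write $G=HV$ is false in general so instead I work inside $M=HV$ with $V\unlhd M$ abelian, and then invoke Lemma~\ref{CrterionOfProSub}: $H$ fails to be pronormal in $M$ precisely when some $H$-invariant subgroup $U\le V$ violates $U=N_U(H)[H,U]$.

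The main obstacle, and the heart of the argument, is producing a concrete $H$-invariant subgroup $U\le V$ witnessing the failure of the decomposition in Lemma~\ref{CrterionOfProSub}(2). Concretely I would take $H=\overline{H}{}\,'$-type object projecting to one of the maximal subgroups $\overline{Q_1}$ or $\overline{Q_2}$ from Lemma~\ref{TorusNormalizer}(4) — say the stabilizer $\overline{Q_1}\cong2^4{:}Sym_5$ of a singular point — lifted to contain $S$, and then search within the $3$-module $V$ for an $H$-submodule $U$ on which the fixed points $C_U(H)=N_U(H)$ are strictly smaller than $U/[H,U]$ would require. The delicate point is checking that $[H,U]\ne U$ while simultaneously $C_U(H)=1$ (or otherwise too small), which is exactly the obstruction to the product $N_U(H)[H,U]$ filling out $U$; this is a cohomological/complete-reducibility statement about the $\mathbb{F}_3 H$-module $U$ that can fail precisely when $3$ divides $|H/S|$-part or when the module is not semisimple. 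Once such $U$ is exhibited, Lemma~\ref{CrterionOfProSub} gives a non-pronormal $H$ in $M$, and then Lemma~\ref{Overgroup}(1) (contrapositive: non-pronormal in the overgroup lifts to non-pronormal in $G$, using that $N_G(S)\le M$) yields a non-pronormal subgroup of odd index in $G$, completing the proof.
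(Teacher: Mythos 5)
Your proposal has two genuine gaps, one of which is fatal to the chosen route. First, the claim that $9\mid q-\varepsilon 1$ together with oddness of $q$ forces $q\equiv\varepsilon 1\pmod 4$ is false: for example $q=19$, $\varepsilon=+$ gives $q-1=18$ divisible by $9$ while $q\equiv 3\pmod 4$, and $q=17$, $\varepsilon=-$ gives $q+1=18$ while $q\equiv 1\pmod 4$. In such cases the torus normalizer $N^{\varepsilon}$ of Lemma~\ref{MaxOddInSubgrE6}(7) is simply not available as a maximal subgroup of odd index, so your ambient group $M$ does not exist and the whole construction collapses for infinitely many $q$ covered by the Proposition. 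This is precisely why the paper works instead inside the maximal subgroup $W$ of Lemma~\ref{MaxOddInSubgrE6}(6), which exists for all odd $q$ with no congruence condition. Second, even where $N^{\varepsilon}$ does exist, you never actually produce the witness: the ``heart of the argument'' as you call it --- an explicit $H$ of odd index and an explicit $H$-invariant $U\le V$ with $U\ne N_U(H)[H,U]$ --- is left as a search, with no verification that $[H,U]\ne U$ or that $N_U(H)$ is too small. Without that, Lemma~\ref{CrterionOfProSub} gives you nothing; the reduction to the criterion is the easy part.

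For comparison, the paper's construction is quite different in its details and entirely explicit. It takes $P$ a Sylow $3$-subgroup of $O(W)$; the hypothesis $9\mid q-\varepsilon 1$ is used exactly to get the asymmetric decomposition $P\cong\mathbb{Z}_{3^n}\times\mathbb{Z}_{3^{n-1}}$ with $n>1$ (the two ranks differ because of the quotient by $(3,q-\varepsilon 1)$), sets $V=\Omega_1(P)$ of rank only $2$, and takes $H=\langle S,x\rangle$ where $x$ is a $3$-element mapping onto $O_3(W/U)$ in $W/U\cong Sym_3$ (here $U$ is the preimage of $O_2(Sym_4)$). The induced action of $\overline{H}\cong Sym_3$ on $V\cong\mathbb{F}_3^2$ is then computed by hand: $[H,V]=\mho^{n-1}(P)$ has order $3$, and the three order-$3$ subgroups of $V$ other than $[H,V]$ are permuted transitively, which forces $N_V(H)\le[H,V]$ and hence $N_V(H)[H,V]<V$. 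The asymmetry of the two cyclic factors of $P$, which your rank-$6$ module over $\Aut(U_4(2))$ does not see, is what makes the computation short. If you want to salvage your approach you would at minimum need a different maximal subgroup for the congruence classes where $N^{\varepsilon}$ is absent, at which point you have essentially been forced back to $W$.
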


\begin{proof} We use notation from Lemma~\ref{MaxOddInSubgrE6}. Consider a maximal subgroup $W$ of $G$ from Statement~$(6)$ of Lemma~\ref{MaxOddInSubgrE6} whose index in $G$ is odd.
Let $P$ be a Sylow $3$-subgroup of $O(W)$. Assume that $9$ divides $q-\varepsilon1$. Then we have
\begin{equation}\label{decomp}
  P\cong{\mathbb Z}_{3^n}\times{\mathbb Z}_{3^{n-1}}
\end{equation}
for some integer $n>1$. Put
$$
V=\Omega_1(P).
$$
Thus, $V$ is an elementary abelian $3$-group of rank~$2$.

We prove that $W$ contains a subgroup $H$ of odd index such that $H$ is not pronormal in~$HV$. It is easy to see that in this case $H$ is a subgroup of odd index in~$G$ which is not pronormal in~$G$.

 By Lemma~\ref{MaxOddInSubgrE6}, we have $$W/EC_W(E)\cong Sym_4.$$ Let $U$ be the complete preimage of $O_2(W/EC_W(E))$ in $W$ with respect to the natural homomorphism $W\to W/EC_W(E)$. Then we have $U\unlhd W$ and $W/U\cong Sym_3$. Let us fix a Sylow $2$-subgroup $S$ of $W$. Let $T=U\cap S$ and $N=N_W(T)$. It is clear that $T$ is a Sylow $2$-subgroup of $U$. Therefore, using the Frattini argument we prove that $W=NU$. It follows that $S< N$ and
$$
N/(N\cap U)\cong NU/U=W/U\cong Sym_3.
$$
This implies that there exists a $3$-element $x$ from $N$ whose image in $N/(N\cap U)$ generates $O_3(N/(N\cap U))$.

Put
$$
H=\langle S,x\rangle.
$$

Since $S\le H$, we see that $H$ is a subgroup of odd index both in~$W$ and in~$G$. We now prove that $H$ is not pronormal in~$HV$.
More precisely, we are going to prove that
\begin{itemize}
  \item $[H,V]$ coincides with $\mho^{n-1}(P)$ (note that $|\mho^{n-1}(P)|=3$), therefore,  $[H,V]<V$,

and
  \item $N_V(H)\le [H,V]$.
\end{itemize}

\noindent Then Lemma~\ref{CrterionOfProSub} implies that $H$ is not pronormal in $HV$.

 By Lemma~\ref{MaxOddInSubgrE6}, the subgroup $W$ contains $N_G(S)=S\times R$, where $R$ is a cyclic subgroup from $O(W)\le Z(EC_W(E))$ whose order is divisible by $3$ (because $9$ divides $q-\varepsilon1$). In particular, $1<V\cap R<V$  and $|V\cap R|=3$. Further, $$C_V(H)\le C_V(S)=V\cap R.$$ We now show that $C_V(H)=1$. Observe that $$EC_W(E)\le U\le  EC_W(E)S,$$ implies $R\le Z(U)$.  Moreover, $W=HU$ by the definition of $H$. If $C_V(H)\ne 1$, then $C_V(H)=V\cap R$ and $$1<V\cap R\le Z(W)=1,$$ see Lemma~\ref{MaxOddInSubgrE6}, a contradiction.

We claim  that $U= C_W(V)$. Indeed, $C_W(V\cap R)$ contains the subgroup $SEC_W(E)$ which has index $3$ in $W$. Taking into account that $Z(W)=1$, we have $C_W(V\cap R)= SEC_W(E)$. This implies that $$C_W(V)\le C_W(V\cap R)= SEC_W(E).$$ Moreover, $C_W(V)\unlhd W$ and we have that $C_W(V)/EC_W(E)$ is a normal $2$-subgroup in $W/EC_W(E)\cong Sym_4$. Therefore, either $C_W(V)=U$ or $C_W(V)=EC_W(E)$.
Since $W/C_W(V)$ is isomorphic to a subgroup of $\Aut(V)\cong GL_2(3)$, and $GL_2(3)$ not contain a subgroup isomorphic to $Sym_4$,  we conclude that $C_W(V)=U$.

Lemma~\ref{p'AutOfAbelianp}  implies that $C_W(P)=U$.

Let $\overline{\phantom{x}}:W\rightarrow W/U$ be the canonical epimorphism. Recall that $H$ acts on $V$ and on $P$ by conjugation.
These actions induce faithful actions of $\overline{H}=\overline{W}\cong Sym_3$ on $V$ and on $P$, respectively.
Let $I=\{\overline{t}_i\mid i=1,2,3\}$ be the set of all the involution from $\overline{H}$. Assume without loss of generality that $\overline{S}=\langle \overline{t}_1\rangle$.  Moreover, the group $\langle\overline{x}\rangle$ acts transitively on $I$ by conjugation.  The group $\overline{H}$ is generated by any two distinct involutions from~$I$.

Using Lemma~\ref{p'AutOfAbelianp}, we conclude that
$$
P=C_P(\overline{t}_1)\times [P,\overline{t}_1].
$$

 The group $C_P(\overline{t}_1)=C_P(S)$ coincides with a Sylow $3$-subgroup of $R$. Therefore, $C_P(\overline{t}_1)=C_P(S)$ is the cyclic group of order $3^{n-1}$.  This, and the decomposition (\ref{decomp}), imply that $$[P,\overline{t}_1]\cong \mathbb{Z}_{3^n}.$$ Therefore, $[P,\overline{t}_1]\cap \mho^{n-1}(P)\ne 1$ and, since $|\mho^{n-1}(P)|=3$, we have
$$
\mho^{n-1}(P)\le [P,\overline{t}_1].
$$

Further,
$\mho^{n-1}(P)\le V$, and the involution $\overline{t}_1$ inverts each element from $[P,\overline{t}_1]$. Therefore,
$
\mho^{n-1}(P)\le [V,\overline{t}_1].
$

It follows from $$C_V(\overline{t}_1)=C_V(S)=V\cap R\ne 1$$  that $
\mho^{n-1}(P)= [V,\overline{t}_1].
$
Finally, since $\mho^{n-1}(P)$ is a characteristic subgroup of~$W$ and  $\overline{H}=\overline{W}$ acts transitively on the set~$I$, we have $$
\mho^{n-1}(P)= [V,\overline{t}_i]
$$
for each $i \in \{1,2,3\}$. Since $V$ is abelian and $\overline{H}=\langle \overline{t}_i\mid i=1,2,3\rangle$, we conclude that
$$
[H,V]=[V,\overline{H}]=\prod_{i=1}^3 [V,\overline{t}_i]= \mho^{n-1}(P),
$$
as claimed.

Now we prove the containment $N_V(H)\le [H,V]$.

Note that for each $i \in \{1,2,3\}$ we have $[H,V]=[V,\overline{t}_i]$ and $$V=C_V(\overline{t}_i)\times [V,\overline{t}_i]=C_V(\overline{t}_i)\times [H,V].$$
Moreover, $$C_V(\overline{t}_i)\cap C_V(\overline{t}_j)=C_V(\langle\overline{t}_i,\overline{t}_j\rangle)=C_V(\overline{H})=1$$ for $1\le i<j\le 3$, and the number of subgroups of order $3$ in $V$, distinct from $[H,V]$, equals to $(9-3)/2=3$. Therefore, each of these subgroups coincides with  $C_V(\overline{t}_i)$ for some $i$, and the group $\overline{H}$ acts on these subgroups transitively.

Suppose to the contrary that $N_V(H)\nleq [H,V]$. Then  $N_V(H)$ contains $C_V(\overline{t}_i)$ for some $i$. However, $N_V(H)$ is invariant under $\overline{H}$, and therefore contains $C_V(\overline{t}_i)$ for each $i\in \{1,2,3\}$. 

 Now the subgroup $C_V(\overline{t}_1)=V\cap R$ of order $3$ acts on the set of all Sylow $2$-subgroups of $H$, and therefore acts on the set~$I$. As $C_V(\overline{t}_1)$ has a fixed point $\overline{t}_1$ on~$I$ and $|I|=3$, the action of $C_V(\overline{t}_1)$ on~$I$ is trivial. In particular, $C_V(\overline{t}_1)
\le C_V(\overline{t}_2)$. This contradicts the facts that $C_V(\overline{t}_i)\cap C_V(\overline{t}_j)=1$ if $i\ne j$ and $|C_V(\overline{t}_1)|=3$.

\end{proof}

\begin{prop}\label{CounterexampleInParabolic Case}
Let $q_0$ be a power of an odd prime $p$ and $q=q_0^r$ for an odd prime~$r$. Then the group $G=E_6^+(q)$
contains a non-pronormal subgroup of odd index.
\end{prop}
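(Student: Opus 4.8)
The plan is to realize the non-pronormal subgroup inside one of the two parabolic maximal subgroups supplied by Lemma~\ref{MaxOddInSubgrE6}(5), exploiting that its unipotent radical is an \emph{abelian} normal subgroup on which the pronormality criterion of Lemma~\ref{CrterionOfProSub} can be tested directly. Fix the involution $t$ and the parabolic $P=P_1$ from Lemma~\ref{MaxOddInSubgrE6}(5); let $V$ be its unipotent radical (elementary abelian of order $q^{16}$) and let $C=C_G(t)$ be the Levi complement, so that $P=VC$ with $V$ abelian and normal in $P$, $V\cap C=1$, and $L=E(C)\cong Spin^{+}_{10}(q)$ acts on $V$ as on the $16$-dimensional spin module over $\mathbb{F}_q$. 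Any subgroup $H$ with $S\le H\le C$ automatically has odd index in $P$ and in $G$, and $H\le HV\le P\le G$; hence by Lemma~\ref{Overgroup}(1) it suffices to produce such an $H$ that fails to be pronormal in $HV$. Since $H\cap V=1$, Lemma~\ref{CrterionOfProSub} (applied with ambient group $HV$) says this amounts to producing an $H$-invariant subspace $U\le V$ with $U\neq C_U(H)+[H,U]$.

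Next I would bring in the subfield structure governed by $r$. By Lemma~\ref{MaxOddInSubgrE6}(8) there is a field automorphism $\varphi$ of order $r$ with $G_0=C_G(\varphi)$ maximal, $E(G_0)\cong E^{+}_6(q_0)$, and $S\le G_0$ after a suitable conjugation. Then $V_0=C_V(\varphi)$ is an $S$-invariant $\mathbb{F}_{q_0}$-form of $V$, so that $V=\mathbb{F}_q\otimes_{\mathbb{F}_{q_0}}V_0$ with $V_0\cong\mathbb{F}_{q_0}^{16}$, and $S$ (lying in $G_0$, acting $\mathbb{F}_q$-linearly while stabilizing $V_0$) acts as $1\otimes S_0$ for some $S_0\le GL_{\mathbb{F}_{q_0}}(V_0)$. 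The arithmetic that singles out the sign $+$ enters here: as $r$ is odd we have $(q-1)_2=(q_0-1)_2$, so a generator $\alpha$ of the Sylow $2$-subgroup of $\mathbb{F}_q^{*}$ already lies in $\mathbb{F}_{q_0}$. Consequently the $2$-part of the central torus of $C$, and with it all of $S$, acts on $V$ through $\mathbb{F}_{q_0}$-linear operators (scalar by $\alpha$ on the central part) and does not, on its own, mix the $\mathbb{F}_q$-structure of $V$ over $\mathbb{F}_{q_0}$.

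The subgroup I would take is $H=\langle S,u\rangle$, where $u$ is a unipotent $p$-element of $L$ chosen \emph{not} to lie in the subfield subgroup $L_0=E(G_0)\cap L$. Then $p\mid |H|$, so $H$ acts non-coprimely on $V$ and Lemma~\ref{p'AutOfAbelianp} no longer forces the splitting $V=C_V(H)\oplus[H,V]$ that would automatically make $H$ pronormal; the choice $u\notin L_0$ is essential, since a $u\in L_0$ would make $H$ act as $1\otimes H_0$, giving $C_V(H)+[H,V]=(C_{V_0}(H_0)+[H_0,V_0])\otimes\mathbb{F}_q$ and thereby reducing the question to $q_0$ without producing any defect. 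Taking $U=V$, the heart of the argument is the explicit computation of $C_V(H)$ and $[H,V]$: the element $u$, acting $\mathbb{F}_q$-linearly, links certain weight spaces of the spin module and genuinely mixes the $\mathbb{F}_{q_0}$-copies, whereas $S$ supplies only $\mathbb{F}_{q_0}$-linear (scalar-by-$\alpha$) motions. One then isolates a weight line on which the combined action fills out, over $\mathbb{F}_p$, only the subfield $\mathbb{F}_p[\alpha]$ and not all of $\mathbb{F}_q$; this is precisely the spanning that holds when $m$ is a power of $2$ by Lemma~\ref{Fields2} and fails otherwise, i.e. exactly in the case $q=q_0^r$. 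The resulting inequality $C_V(H)+[H,V]\neq V$ then yields, via Lemma~\ref{CrterionOfProSub}, that $H$ is not pronormal in $HV$, hence not in $G$.

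The main obstacle is this final module computation. One must pin down the weights of the $16$-dimensional spin module under a fixed maximal torus, determine the Jordan action of $u$ on the relevant weight spaces, and verify that no element of $S$ (in particular, no monomial element representing the Weyl group) carries the deficient weight line onto a weight space on which $\alpha$ acts nontrivially, which would repair the defect. It is at this point that the hypothesis ``$m$ is not a power of $2$'' is consumed, through the failure of the basis property of Lemma~\ref{Fields2}; keeping careful track of the $\mathbb{F}_{q_0}$-form $V_0$ together with the fact that $\alpha\in\mathbb{F}_{q_0}$ is what both blocks the trivializing descent $q\mapsto q_0$ and ties the surviving deficiency to $r>1$.
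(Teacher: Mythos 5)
There is a genuine and fatal gap in the proposed construction. You take $H=\langle S,u\rangle$ with $H\le C$, so $H\cap V=1$, and you hope to violate the criterion of Lemma~\ref{CrterionOfProSub} by showing $C_U(H)[H,U]\ne U$ for some $H$-invariant $U\le V$. But this can never happen for a subgroup of the Levi complement containing $S$. Indeed, $Z=Z(C)\cong\mathbb{Z}_{(q-1)/(3,q-1)}$ has a nontrivial Sylow $2$-subgroup $T$ (as $q$ is odd), $T$ is central in $C$ and hence lies in \emph{every} Sylow $2$-subgroup of $C$, so $T\le S\le H$; and $Z$ acts on $V$ faithfully by $\mathbb{F}_q$-scalars, so a generator of $T$ acts as multiplication by a scalar $\alpha\ne 1$. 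For any $H$-invariant subgroup $U\le V$ one then has $[T,U]=(\alpha-1)U=U$ (multiplication by the nonzero field element $\alpha-1$ is a bijection of the $\alpha$-invariant $\mathbb{F}_p$-subspace $U$), whence $[H,U]=U$ and the condition $U=N_U(H)[H,U]$ of Lemma~\ref{CrterionOfProSub} holds trivially. So every subgroup $H$ with $S\le H\le C$ \emph{is} pronormal in $HV$; the observation you make yourself, that $\alpha$ already lies in $\mathbb{F}_{q_0}$, does not help because $\alpha\ne 1$ is all that is needed to kill the construction. The subsequent "weight-space computation" on the spin module is never carried out and could not be made to work; you are also conflating the hypotheses: Lemma~\ref{Fields2} and the condition that $m$ be a power of $2$ belong to the sufficiency direction of the Theorem (pronormality inside $VZ$), not to this proposition, whose only arithmetic input is that $\mathbb{F}_{q_0}$ is a proper subfield of $\mathbb{F}_q$.

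The paper's proof avoids this obstruction precisely by \emph{not} taking $H$ inside a Levi complement: it sets $H=SV_0$, where $V_0=O_p(P_0)$ is the unipotent radical of the subfield parabolic $P_0=C_P(\varphi)$, so that $H\cap V=V_0$ is a proper nontrivial ($\mathbb{F}_{q_0}$-rational) subgroup of $V$. By Zsigmondy there is $g\in Z=Z(C)$ whose order does not divide $q_0-1$; writing the action of $g$ on root subgroups via a character $\chi$, one finds a root $r$ with $X_r\le V$ and $\chi(r)\notin\mathbb{F}_{q_0}^*$, so $V_0^g\ne V_0$, while $S^g=S$. Then $\langle H,H^g\rangle=\langle V_0,V_0^g,S\rangle$ contains $V_0=O_p(H)$ and $V_0^g=O_p(H^g)$ as two \emph{distinct normal} subgroups (both are $S$-invariant and commute inside the abelian group $V$), so $H$ and $H^g$ cannot be conjugate there. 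If you want to salvage your write-up, you must replace your $H$ by a subgroup meeting $V$ in a subfield form of the unipotent radical and argue along these lines; no choice of $H$ complementing $V$ will do.
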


\begin{proof} We use notation from Lemma~\ref{MaxOddInSubgrE6}.

Let $\varphi$ be the canonical field automorphism of order $r$ of $G$ and $G_0=C_G(\varphi)$. Then by Lemma~\ref{MaxOddInSubgrE6} and \cite[Proposition~4.9.1]{GLS}, we have

\begin{itemize}
  \item[$(1)$] $|G:G_0|$ is odd{\rm;}
  \item[$(2)$]  $G_0'=E(G_0)\cong E_6(q_0)$ and $G_0$ is isomorphic to a subgroup of ${\rm Inndiag}(E_6(q_0))${\rm;} and
  \item[$(3)$] $G_0$ considered as a Chevalley group contains a parabolic subgroup $P_0$ of odd index such that $P_0$ is contained in a parabolic subgroup $P\in\{P_1, P_2\}$ of $G$, where $P_1$ and $P_2$ are subgroups from Lemma~\ref{MaxOddInSubgrE6}, and $P$ is invariant under~$\varphi$. Moreover, $P_0=C_P(\varphi)$ and the unipotent radical $O_p(P_0)$ of $P_0$ is contained in $O_p(P)$.
\end{itemize}

Let $S$ be a Sylow $2$-subgroup of $P_0$ and $V_0=O_p(P_0)$. Let us put $H=SV_0$  and prove that $H$ is not pronormal in~$P$.
More precisely, if $V=O_p(P)$ is the unipotent radical of~$P$, $C$ is a Levi complement in~$P$ such that $S\le C$, and $Z=Z(C)$, then $H$ is not pronormal in $ZSV$. Indeed, by Lemma~\ref{MaxOddInSubgrE6}, the subgroup~$Z$ is isomorphic to a subgroup of the multiplicative group~$\mathbb{F}_q^*$ and $|\mathbb{F}_q^*:Z|$ divides $3$, by the Zsigmondy theorem~\cite{z}, $Z$ contains an element whose order does not divide $|\mathbb{F}_{q^{}_0}^*|=q^{}_0-1$ (otherwise
$$q_0^r-1=|\mathbb{F}_{q}^*|\text{ divides }3|\mathbb{F}_{q^{}_0}^*|=3(q^{}_0-1),$$ whence  $$q_0^{r-1}+q_0^{r-2}+\dots+q^{}_0+1\text{ divides }3$$
and $q_0=2$, a contradiction to the fact that $p$ is odd).

Let $g\in Z$ be an element whose order does not divide $q^{}_0-1$. Since $Z$ is contained in a Cartan subgroup of~$G$, the element~$g$ corresponds to some character
$$\chi:\mathbb{Z}\Phi\rightarrow \mathbb{F}_q^*, \,\text{ where }\,\Phi\text{ is a root system of type }E_6,$$
and each root subgroup $X_r=\{x_r(\alpha)\mid \alpha\in \mathbb{F}_q\}$ for $r\in\Phi$ is invariant under $g$ with respect to the action $$g^{-1}x_r(\alpha)g=x_r(\chi(r)^{-1}\alpha)$$ (see \cite[7.1, in particular, p.~100]{Car}).
The element $g$ was chosen such a way that $\chi(r)\notin \mathbb{F}_{q^{}_0}^*$ for some root $r\in\Phi$, where
$X_r\le V$ since $g$ belongs to the center of a Levi complement~$C$. Further,
$$V_0= \langle x_s(\alpha)\mid X_s\le V, \alpha\in \mathbb{F}_{q^{}_0}\rangle,$$
and there exists an ordering of roots such that each element from $V$ and each element from $V_0$ has a unique representation as a product of root elements $x_s(\alpha)$  corresponding to distinct roots~$s$ taken with respect to this ordering.  Thus, $$g^{-1}x_r(1)g=x_r(\chi(r)^{-1})\in V_0^g\setminus V_0$$
and the subgroups $V_0$ and $V_0^g$ are distinct.  But $g\in Z(C)$, therefore, $S^g=S$. Thus,
$$
\langle H, H^g\rangle= \langle V_0, V_0^g, S\rangle.
$$
Suppose that the subgroups $H$ and $H^g$ are conjugate in the subgroup $\langle H, H^g\rangle$. Then the subgroups $V_0=O_p(H)$ and
$V_0^g=O_p(H^g)$ are conjugate in $\langle H, H^g\rangle$. However, the subgroups $V_0$ and $V_0^g$ are normal in $\langle H, H^g\rangle= \langle V_0, V_0^g, S\rangle$ (since both $V_0$ and $V_0^g$ are invariant under $S$ and centralize each other because they both are subgroups of $V$ which is abelian), therefore,  $V_0$ and $V_0^g$ are not conjugate in $\langle H, H^g\rangle$. This is a contradiction.
\end{proof}

\section{Proof of Theorem}

Let $G=E^{\varepsilon}_6(q)$, where $\varepsilon\in\{+,-\}$, $q=p^m$, and $p$ is a prime.

Propositions~\ref{Counterexample9dividesq-1} and~\ref{CounterexampleInParabolic Case} imply that if all the subgroups of odd index are pronormal in $G$, then $q\not\equiv \varepsilon1\pmod {18}$ and if $\varepsilon= +$, then $m$ is a power of $2$.

Let us prove the converse.
Assume that $q\not\equiv \varepsilon1\pmod {18}$ and if $\varepsilon= +$, then $m$ is a power of $2$. Now we argue that all the subgroups of odd index are pronormal in $G$.

We can assume that $q$ is odd by Lemma~\ref{Char2}. Let $H$ be a subgroup of odd index in $G$, and let  $S$ be a Sylow $2$-subgroup of $H$ and $G$, and let $g$ be an element of odd order from $N_G(S)$. By Lemma~\ref{MaxOddInSubgrE6}, we have $N_G(S)=S\times R$, where $R$ is the cyclic group of order $(q-\varepsilon1)_{2'}/(3,q-\varepsilon1)$. Therefore $g\in R\le Z(N_G(S))$ and $|g|$ is not divisible by $3$.
To prove that $H$ is pronormal in $G$ it is
sufficient to prove by Lemma~\ref{NormSyl} that $H$ and $H^g$ are conjugate in $K=\langle H, H^g\rangle$. There is nothing to prove if $K=G$, so we can assume that $K<G$. Thus, there exists a maximal subgroup $M$ of $G$ such that $K\le M$. It is easy to see that $M$ is one of the maximal subgroups of odd index in $G$  listed in Lemma~\ref{MaxOddInSubgrE6}.

Observe that we can assume that $g\in M$. Indeed, Lemma~\ref{MaxOddInSubgrE6} implies that $N_G(S)\le M$ or $M=C_G(\varphi)$ for a field automorphism $\varphi$ of odd prime order~$r$. In the latter, let us suppose that $g\notin C_G(\varphi)$. We have
  $$
  x:=[g^{-1},\varphi]=g(g^{-1})^\varphi\ne 1.
  $$
  Note that $x$ centralizes $H$. Indeed, if $h\in H$, then since
$$h,h^g\in  K\le M=C_G(\varphi)=C_G(\varphi^{-1}),$$
we have
  $$h^x=(h^g)^{\varphi^{-1} g^{-1}\varphi}=(h^g)^{ g^{-1}\varphi}=h^{\varphi}=h.$$
Since $S\le H$, we have that $x\in N_G(S)$. But $g\in Z(N_G(S))$, therefore,
$$g\in C_G(x)<G.$$
Let $M_0$ be a maximal subgroup of~$G$ which contains $C_G(x)$. Since $H\le C_G(x)$, we have that $M_0$ is a subgroup of odd index in $G$ containing $H$, $g$, and $K=\langle H, H^g\rangle$, as claimed.

Thus, by Lemma~\ref{Overgroup}, to prove that all the subgroups of odd index are pronormal in $G$ it is enough to prove that all the subgroups of odd index are pronormal in any maximal subgroup $M$ of odd index of $G$. Let us consider these subgroups case by case with respect to Lemma~\ref{MaxOddInSubgrE6}. So, below we use notation from Lemma~\ref{MaxOddInSubgrE6}.

\medskip

C~a~s~e~~1: $M=G_0=C_G(\varphi)$, where $\varphi$ is a field automorphism of odd prime order~$r$ of the group~$G$.

It is known that any two elements of order $r$ from the coset $G\varphi$ of the group $Aut(G)$ by $Inn(G)$ (which is identified to $G$), are conjugate by an inner-diagonal automorphism сопряжены  (see \cite[Proposition~4.9.1]{GLS}). Therefore centralizers of these elements in $G$ are isomorphic. Let us identify $\varphi$ to an automorphism $\overline{\phantom{x}}:\mathbb{F}_q\rightarrow \mathbb{F}_q$ of the field $\mathbb{F}_q$ such that for each root element $x_r(\alpha)$ of a non-twisted group of Lie type $E_6$ (either this group coincides with $G$ or $G$ was constructed via this group), the following equalities hold
  $$x_r(\alpha)^\varphi= x_r(\overline{\alpha})=x_r({\alpha}^\varphi)$$
(see \cite[12.2, in particular, p.~200]{Car}). Let $\mathbb{F}_{q_0^{}}$ be the subfield of $\mathbb{F}_q$ which consists of fixed points of~$\varphi$, thus, $q=q_0^r$. Let us prove that $G_0\cong E_6^\varepsilon(q_0)$ and then use inductive reasonings to prove that all the subgroups of odd index are pronormal in $G_0$. It is enough to prove that $|G_0/E(G_0)|=1$.

Suppose for the contradiction that  $|G_0/E(G_0)|\not=1$. By~\cite[Proposition~4.9.1]{GLS}, the group $E(G_0)\cong E_6^\varepsilon(q_0^{})$ has an outer diagonal automorphism, therefore,
$$q_0^{}\equiv\varepsilon1\pmod 3.$$
Recall that  $9$ does not divide the number $$q-\varepsilon1=(q_0-\varepsilon1)\left(\displaystyle\sum_{i=0}^{r-1}(\varepsilon1)^iq_0^{r-i-1}\right),$$
therefore a Sylow $3$-subgroup of the multiplicative group of the field $\mathbb{F}_q$ is contained in the subfield
$\mathbb{F}_{q_0^{}}$ which consists of fixed points of~$\varphi$. Consider an extended Cartan subgroup $\hat H$ of $G$,
which is invariant under~$\varphi$, and let $\hat G=\hat{H}G$ be the group of inner-diagonal automorphisms of~$G$. Since $|\hat{ G}:G|=3$, there exists a $3$-element $\delta$ from $\hat{H}$ such that $\hat G=\langle G,\delta\rangle$. Since $\varphi$ centralizes a Sylow $3$-subgroup of the multiplicative group of the field $\mathbb{F}_q$, we have that $\varphi$ considered as a field automorphism of~$G$ and of $\hat G$ centralizes a Sylow $3$-subgroup of an abelian group $\hat H$. In particular, $\varphi$ centralizes~$\delta$.

It is known (see \cite[Proposition~4.9.1]{GLS}) that $C_{\hat G}(\varphi)$ is isomorphic to the group of inner-diagonal automorphisms of $G_0'\cong E_6^\varepsilon(q^{}_0)$. Since $\delta\in C_{\hat G}(\varphi)$, we have
$\hat G=GC_{\hat G}(\varphi)$, therefore,
$$C_{\hat G}(\varphi)/C_{G}(\varphi)\cong \hat G/G\cong \mathbb Z_3.$$
Thus, $G_0=C_{G}(\varphi)$ coincides with a unique normal subgroup of index $3$ of
$C_{\hat G}(\varphi)$, therefore, $G_0\cong E_6^\varepsilon(q^{}_0).$

\medskip

C~a~s~e~~2:  $M=C=C_G(t)$ is the centralizer in $G$ of an involution~$t$ from the center of a Sylow $2$-subgroup $S$ of~$G$.

Note that $C$ is a maximal subgroup of $G$ only if $\varepsilon=-$. We omit this condition in our reasonings because if $\varepsilon=+$, then we need to prove that the subgroups of odd index are pronormal in $C=C_G(t)$ to consider cases~4 and~5 below.

If $H$ is a subgroup of~$C$ containing~$S$ and~$g$ is an element of odd order from $N_G(S)$, then $g\in Z(C)$ by Lemma~\ref{MaxOddInSubgrE6} and, therefore, $H=H^g$. Thus,  $H$ is pronormal in $C$ by Lemma~\ref{NormSyl}.

\medskip

C~a~s~e~~3: $M=W.$

Recall that $g\in R$ and $R$ is a $\{2,3\}'$-subgroup of $O(W)$, where $O(W)$ is a normal abelian subgroup of~$W$ which is contained in~$C_W(E)$. Thus, in this case it is sufficient to prove that $H$ is pronormal in $O_{\{2,3\}'}(W)H$.
Let us prove that for each $H$-invariant subgroup $U$ of $O_{\{2,3\}'}(W)$ the inclusion (really, the equality)
$U\le N_U(H)\,[H,U]$ holds, then the pronormality of $H$ in $O_{\{2,3\}'}(W)H$ will follow from Lemma~\ref{CrterionOfProSub}. Since $U$ is a direct product of its Sylow subgroups, and each of these Sylow subgroups is $H$-invariant, it is sufficient to consider a case when $U$ is an $r$-group for some prime~$r>3$.

By Lemma~\ref{MaxOddInSubgrE6}, $O(W)$ is contained in the center of  $EC_W(E)$ and $$W/ EC_W(E)\cong Sym_4.$$ So that, $U$ is contained in the center of  $EC_W(E)$.
It follows that $\overline{H}:=H/C_H(U)$ is a $\{2,3\}$-group. By Lemma~\ref{p'AutOfAbelianp}, we have
$$
U=C_U(\overline{H})\,[\overline{H},U]=C_U({H})\,[{H},U]\le N_U(H)\,[H,U].
$$

\medskip

C~a~s~e~~4: $M=N^{\varepsilon}$, where $q\equiv\varepsilon1\pmod 4$.

By Lemma~\ref{MaxOddInSubgrE6}, $M$ contains an abelian normal subgroup
$T$ of the form $(q- \varepsilon1)^6/(3,q- \varepsilon1)$ such that $M/T$ is isomorphic to
$\Aut(U_4(2))\cong GO^-_6(2)$. Let $\overline{\phantom{x}}$ be the canonical epimorphism $M\rightarrow M/T$.
It is known  (see \cite{Atlas}) that a Sylow $2$-subgroup of the group $\overline{M}$ coincides with its normalizer in $\overline{M}$.
Since $q\not\equiv \varepsilon1\pmod {18}$, we have that $R$ is contained in $O_{\{2,3\}'}(T)$. Thus, by Lemmas~\ref{NormSyl}, \ref{HV=>VB}, and~\ref{CrterionOfProSub}, it is sufficient to prove that $H$ is pronormal in $HO_{\{2,3\}'}(T)$.

Let $S_0$ be a Sylow $2$-subgroup of $T$. Put $V=\Omega_1(S_0)$. By Lemma~\ref{TorusNormalizer}, we have that $V$ is an elementary abelian $2$-group of rank~$6$, and a unique involution~$t$ from $Z(S)$ belongs to~$V$. Lemma~\ref{TorusNormalizer} implies that one of the following cases holds{\rm:}

 \begin{itemize}
   \item[$(4a)$] $\overline H\leq \overline{Q_1}$,  where $\overline{Q_1}$ is the stabilizer in~$\overline M$ of an isotropic subspace $\langle t\rangle$ of dimension~$1$ from $V$, and $$\overline{Q_1}\cong 2^4:Sym_5;$$
   \item[$(4b)$] $\overline H\leq \overline{Q_2}$, where  $\overline{Q_2}$ is the stabilizer in~$\overline M$ of an isotropic subspace $Y$ of dimension~$2$ from~$V$, and
$$\overline{Q_2}\cong (SL_2(3)\circ SL_2(3)).2^2;$$
   \item[$(4c)$] $\overline H=\overline M$.
 \end{itemize}

If Case~$(4a)$ appears, then we reduce to Case~2 considered above.

If Case~$(4b)$ appears, then $\overline{H}$ is a $\{2,3\}$-group and, as in Case~3, we have $$
U=C_U(\overline{H})\,[\overline{H},U]=C_U({H})\,[{H},U]\le N_U(H)\,[H,U].
$$

Assume that Case~$(4c)$ occurs.
Now we prove that $U=N_U(H)[H,U]$ for each $H$-invariant subgroup~$U$ of~$O_{\{2,3\}'}(T)$ and use Lemma~\ref{CrterionOfProSub}. As in Case~3, we can assume that $U$ is an $r$-group for some prime~$r>3$ from~$O_{\{2,3\}'}(T)$, whence the equality $U=[{H},U]= N_U(H)\,[H,U]$ obviously follows.   Indeed, by Lemmas~\ref{MaxOddInSubgrE6} and~\ref{TorusNormalizer}, the following statements hold{\rm:} $C_{O(T)}(\overline{Q_1})=R$, $C_{O(T)}(\overline{Q_2})\leq R$, and
$\overline{M}=\langle\overline{Q_1},\overline{Q_2}\rangle$. But Lemma~\ref{TorusNormalizer} implies that the group $\overline{M}$
acts irreducible on $\Omega_1(O_r(T))$ for each prime divisor~$r$ of the number $|O_{\{2,3\}'}(T)|$, therefore,
$C_{O_r(T)}(\overline{Q_2})=1$ by Lemma~\ref{p'AutOfAbelianp}. Since $\overline{Q_2}$ is a $\{2,3\}$-group, we have $U=C_U(\overline{Q_2})\times [\overline{Q_2},U]=[\overline{Q_2},U]\leq [\overline{M},U]$ by Lemma~\ref{p'AutOfAbelianp}. Thus,
$U=[\overline{H},U]$.

\medskip

To complete the proof it remains to consider the following case.

C~a~s~e~~5: $\varepsilon=+$ and $M$ is conjugate to a subgroup $P\in\{P_1,P_2\}$ from Lemma~\ref{MaxOddInSubgrE6}.

We use Lemma~\ref{Important}. Let $V$ be the unipotent radical of~$P$.
Sylow $2$-subgroups of~$P/VZ\cong C/Z$ are self-normalized (see \cite{Kond1} and Lemma~\ref{N_G(S)=S}). A Sylow $2$-subgroup $T$ of $Z$ coincides with a Sylow $2$-subgroup of the group $VZ$, and $N_P(T)$ coincides with a Levi complement~$C$ of~$P$.
Using Case~2 considered above we conclude that all the subgroups of odd index are pronormal in~$C$. By Lemma~\ref{Important},
it is sufficient to prove that all the subgroups of odd index are pronormal in~$VZ$.

Let $H_1$ be a subgroup of odd index in~$VZ$. Consider a subgroup $U=H_1\cap V=O_p(H_1)$. The Schur--Zassenhaus theorem implies that $H_1=XU$, where $X$ is a Hall $p'$-subgroup of~$H_1$, and by the Hall theorem, we can assume that $X\le Z$ since $Z$ is a Hall $p'$-subgroup of~$VZ$ and $VZ$ is solvable. In particular, $X$ contains a Sylow $2$-subgroup $T$ of $Z$.
Let us prove that $U$ is normal in~$VZ$. It is easy to see that $U$ is normal in $V$ since $V$ is abelian. Thus, it is sufficient to prove that $U$ is $Z$-invariant, i.e. $u^z\in U$ for each $u\in U$ and each $z\in Z$.

By~\cite[Table~3]{Kor}, $C$ acts on $V$ by conjugation and induces on~$V$ a faithful irreducible $\mathbb{F}_q C$-module. The Clifford theorem~\cite[Theorem~3.4.1]{G} implies that $\mathbb{F}_q Z$-module $V$ is completely reducible and is equal to a direct product of irreducible $\mathbb{F}_q Z$-modules which are pairwise conjugate in $C$. By \cite[Theorem~3.2.4]{G}, since the order of the cyclic group $Z$ divides $q-1$, the group~$Z$ acts on $V$ with scalar action, i.\,e. for any element $z$ from $Z$ there exists an element $\beta\in \mathbb{F}^*_q$ such that $v^z=\beta v$ for each $v\in V$.

The subgroup $U$ defined above could be considered as a subspace of~$V$ considering as a vector space over the prime subfield $\mathbb{F}_p$ of the field~$\mathbb{F}_q$.

Recall that the degree of $\mathbb{F}_q$ over~$\mathbb{F}_p$ is $2^k$.
Let $x$ be a generator of a Sylow $2$-subgroup of $Z$ and $v^x=\alpha v$ for all $v\in V$. Since
$V$ is a faithful module and $|Z|=(q-1)/(3,q-1)$, the element $\alpha$ is a generator of a Sylow $2$-subgroup of the group $\mathbb{F}^*_q$, and by Lemma~\ref{Fields2}, elements $$1,\alpha,\alpha^2,\dots, \alpha^{2^k-1}$$ form a basis of the field $\mathbb{F}_q$ as a vector space over~$\mathbb{F}_p$.

Choose an arbitrary element $u\in U$. Since $x\in T\le H_1$, we have
$$\alpha^i u=u^{x^i}\in U.$$

If for an arbitrary element $z\in Z$  the equality $u^z=\beta u$ holds, where $\beta\in \mathbb{F}^*_q$, then for some  $\lambda_i\in\mathbb{F}_p$ we have
$$\beta=\sum \lambda_i\alpha^i\quad \text{ and }\quad u^z=\beta u=\sum \lambda_i(\alpha^i u)=\sum \lambda_iu^{x^i}\in U.$$

Take $g\in N_{VZ}(T)=Z$. Then $g$ normalizes $X$ since $Z$ is abelian, and $g$ normalizes $U$ by the fact proved above.
Then $g$ normalizes $H_1=XU$, and in view of Lemma~\ref{NormSyl}, the subgroup $H_1$ is pronormal in $VZ$. This completes the proof of the theorem.

\section{Acknowledgements}

This work was supported by the Russian Science Foundation (project 19-71-10067).

The authors are thankful to Prof. Stephen Glasby for his helpful comments which improved this text.

\bigskip

\bigskip

Anatoly~S. Kondrat'ev

Krasovskii Institute of Mathematics and Mechanics UB RAS, Yekaterinburg, Russia

E-mail address: a.s.kondratiev@imm.uran.ru

\medskip

Natalia~V. Maslova

Krasovskii Institute of Mathematics and Mechanics UB RAS, Yekaterinburg, Russia

Ural Federal University, Russia

E-mail address: butterson@mail.ru

ORCID: 0000-0001-6574-5335

\medskip

Danila~O. Revin

Krasovskii Institute of Mathematics and Mechanics UB RAS, Yekaterinburg, Russia

Sobolev Institute of Mathematics SB RAS, Novosibirsk, Russia

Novosibirsk State University, Russia

E-mail address: danila.revin@gmail.com


\begin{thebibliography}{1}









\bibitem{Car}
R.~W.~Carter, Simple groups of Lie type. London: Wiley, 1972. 332pp.

\bibitem{Atlas}
J.~H.~Conway, R.~T.~Curtis, S.~P.~Norton, R.~A.~Parker, R.~A.~Wilson,
Atlas of finite groups. Oxford: Clarendon Press, 1985. 252pp.



\bibitem{G}
D.~Gorenstein, Finite groups. N. Y.: Harper and Row, 1968. 528pp.

\bibitem{GLS} D.~Gorenstein, R.~Lyons, R.~Solomon, The classification of the finite simple groups, Number~3.
Math. Surv. Monogr., 40:3 (1994), 419pp.

\bibitem{GMR}
W.~Guo, N.~V.~Maslova, D.~O.~Revin, On the pronormality of subgroups of odd index in some extensions of finite groups, Siberian Math. J., 59:4 (2018), 610--622.


\bibitem{GR_Surv}  W.~Guo, D.~O. Revin, Pronormality and submaximal $\mathfrak{X}$-subgroups in finite groups, Communications in Mathematics and Statistics, 6:3 (2018), 289--317.



\bibitem{Hall_lectures}
P.~Hall, Phillip Hall lecture notes on group theory -- Part 6. University of Cambridge, 1951-1967,  http://omeka.wustl.edu/omeka/items/show/10788.


\bibitem{AtlasBr}
C.~Jansen, K.~Lux, R.~Parker, R.~Wilson, An atlas of Brauer characters. Oxford: Clarendon Press, 1995. 327pp.


\bibitem{KL}
P.~B.~Kleidman,  M.~Liebeck, The subgroup structure of the finite classical groups. Cambridge: Cambridge University
Press, 1990. 303pp.

\bibitem{Kond1}
A.~S.~Kondrat'ev, Normalizers of the Sylow 2-subgroups in finite simple groups, Math. Notes, 78:3 (2005),  338--346.

\bibitem{KM}
A.~S.~Kondrat'ev, V.~D.~Mazurov, 2-signalizers of finite simple groups, Algebra and Logic, 42:5 (2003),  333--348.

\bibitem{KMR_1}
A.~S.~Kondrat'ev, N.~V.~Maslova, D.~O.~Revin, On the pronormality of subgroups of odd index in finite simple groups, Siberian Math. J., 56:6 (2015), 1101--1107.

\bibitem{KMR_2}
A.~S.~Kondrat'ev, N.~V.~Maslova, D.~O.~Revin, A pronormality criterion for supplements to abelian normal subgroups, Proc. Steklov Inst. Math. (Suppl.), 296:Suppl.~1 (2017), S145--S150.

\bibitem{KMR_3}
A.~S.~Kondrat'ev, N.~V.~Maslova, D.~O.~Revin, On the pronormality of subgroups of odd index in finite simple symplectic groups, Siberian Math. J., 58:3 (2017), 467--475.

\bibitem{KMR_4}
A.~S.~Kondrat'ev, N.~V.~Maslova, D.~O.~Revin, On pronormal subgroups in finite simple groups, Doklady Mathematics, 98:2 (2018), 405--408.

\bibitem{KMR_Surv} A.  Kondrat'ev, N. Maslova, D. Revin, On the pronormality of subgroups of odd index in finite simple groups // Groups St Andrews 2017 in Birmingham,  (Birmingham, 5th-13th August 2017), London Mathematical Society Lecture Note Series, 455, eds. C. M. Campbell, M. R. Quick, C. W. Parker, E. F. Robertson, C. M. Roney-Dougal, Cambridge University Press, Cambridge, 2019, 406--418.



\bibitem{Kor}
V.~V.~Korableva, On the chief factors of parabolic maximal subgroups in finite simple groups of normal Lie type, Siberian Math. J., 55:4 (2014), 622--638.


\bibitem{LS}
M.~Liebeck, J.~Saxl, The primitive permutation groups of odd degree, J. London Math. Soc. II Ser., 31:2 (1985), 250--264.

\bibitem{LSS}
M.~Liebeck, J.~Saxl, G.~M.~Seiz, Subgroups of maximal rank in finite exceptional grous of Lie type, Proc. London Math.
Soc. III Ser., 65:2 (1992), 297--325.

\bibitem{VR}
E.~P.~Vdovin, D.~O.~Revin, Pronormality of Hall subgroups in finite simple groups, Siberian Math. J., 53:3 (2012), 419--430.

\bibitem{z}
K.~Zsigmondy, Zur Theorie der Potenzreste, Monatsh. Math. Phys. 1892. Bd.~3. S.~265--284.

















\end{thebibliography}
\end{document}